\providecommand{\tabularnewline}{\\}
\theoremstyle{plain}
\newtheorem{thm}{\protect\theoremname}
\newenvironment{proof}[1][\protect\proofname]{\par
\normalfont\topsep6\p@\@plus6\p@\relax
\trivlist
\itemindent\parindent
\item[\hskip\labelsep\scshape #1]\ignorespaces
}{%
\endtrivlist\@endpefalse
}
\providecommand{\proofname}{Proof}
\theoremstyle{plain}
\newtheorem{lem}[thm]{\protect\lemmaname}
\theoremstyle{remark}
\newtheorem{rem}[thm]{\protect\remarkname}
\DeclareUrlCommand\email{\urlstyle{tt}\scriptsize}
\urldef{\figueredo}{\email}{figueredo@ieee.org}
\urldef{\kussaba}{\email}{kussaba@lara.unb.br}
\urldef{\ishihara}{\email}{ishihara@ene.unb.br}
\urldef{\badorno}{\email}{adorno@ufmg.br}
\newcommand*\textfigsize{%
  \@setfontsize\textfigsize{7.0}{7.0}%
}
\journal{Journal of the Franklin Institute (DOI: doi:10.1016/j.jfranklin.2017.01.028)}
\def\ps@pprintTitle{%
     \let\@oddhead\@empty
     \let\@evenhead\@empty
     \def\@oddfoot{\footnotesize\itshape
       Submitted to \ifx\@journal\@empty Elsevier  
       \else\@journal\fi\hfill}%
     \let\@evenfoot\@oddfoot}
\providecommand{\lemmaname}{Lemma}
\providecommand{\remarkname}{Remark}
\providecommand{\theoremname}{Theorem}
\begin{document}
\global\long\def\dualquaternion#1{\underline{\boldsymbol{#1}}}

\global\long\def\quaternion#1{\boldsymbol{#1}}

\global\long\def\dq#1{\underline{\boldsymbol{#1}}}

\global\long\def\quat#1{\boldsymbol{#1}}

\global\long\def\mymatrix#1{\boldsymbol{#1}}

\global\long\def\myvec#1{\boldsymbol{#1}}

\global\long\def\mapvec#1{\boldsymbol{#1}}

\global\long\def\dualvector#1{\underline{\boldsymbol{#1}}}

\global\long\def\dual{\varepsilon}

\global\long\def\dotproduct#1{\langle#1\rangle}

\global\long\def\norm#1{\left\Vert #1\right\Vert }

\global\long\def\mydual#1{\underline{#1}}

\global\long\def\hamilton#1#2{\overset{#1}{\operatorname{\mymatrix H}}\left(#2\right)}

\global\long\def\hami#1{\overset{#1}{\operatorname{\mymatrix H}}}

\global\long\def\tplus{\dq{{\cal T}}}

\global\long\def\getp#1{\operatorname{\mathcal{P}}\left(#1\right)}

\global\long\def\getd#1{\operatorname{\mathcal{D}}\left(#1\right)}

\global\long\def\swap#1{\text{swap}\{#1\}}

\global\long\def\imi{\hat{\imath}}

\global\long\def\imj{\hat{\jmath}}

\global\long\def\imk{\hat{k}}

\global\long\def\real#1{\operatorname{\mathrm{Re}}\left(#1\right)}

\global\long\def\imag#1{\operatorname{\mathrm{Im}}\left(#1\right)}

\global\long\def\imvec{\boldsymbol{\imath_{m}}}

\global\long\def\vector{\operatorname{vec}}

\global\long\def\mathpzc#1{\fontmathpzc{#1}}

\global\long\def\cost#1#2{\underset{\text{#2}}{\operatorname{\text{cost}}}\left(\ensuremath{#1}\right)}

\global\long\def\diag#1{\operatorname{diag}\left(#1\right)}

\global\long\def\trace#1{\operatorname{tr}\left(#1\right)}

\global\long\def\skewsymproduct#1{\ensuremath{\left\lfloor #1\right\rfloor _{\times}}}

\global\long\def\axisangle#1#2{\ensuremath{R\left(\myvec{#1},#2\right)}}

\global\long\def\closedballset{\ensuremath{\mathbb{B}}}

\global\long\def\complexset{\ensuremath{\mathbb{C}}}

\global\long\def\realset{\ensuremath{\mathbb{R}}}

\global\long\def\rationalset{\ensuremath{\mathbb{Q}}}

\global\long\def\integerset{\ensuremath{\mathbb{Z}}}

\global\long\def\naturalset{\ensuremath{\mathbb{N}}}

\newcommandx\projspace[2][usedefault, addprefix=\global, 1=n]{\mathbb{P}^{#1}\left(#2\right)}

\newcommandx\GL[2][usedefault, addprefix=\global, 1=\realset]{\ensuremath{\textrm{GL}\left(#2,#1\right)}}

\newcommandx\SL[2][usedefault, addprefix=\global, 1=\realset]{\ensuremath{\textrm{SL}\left(#2,#1\right)}}

\global\long\def\SE#1{\ensuremath{\textrm{SE}(#1)}}

\global\long\def\SO#1{\ensuremath{\textrm{SO}(#1)}}

\global\long\def\SU#1{\ensuremath{\textrm{SU}(#1)}}

\newcommandx\liealgebraGL[2][usedefault, addprefix=\global, 1=\realset]{\ensuremath{\mathfrak{gl}\left(#2,#1\right)}}

\newcommandx\liealgebraSL[2][usedefault, addprefix=\global, 1=\realset]{\ensuremath{\mathfrak{sl}\left(#2,#1\right)}}

\global\long\def\liealgebraSE#1{\ensuremath{\mathfrak{se}\left(#1\right)}}

\global\long\def\liealgebraSO#1{\ensuremath{\mathfrak{so}\left(#1\right)}}

\global\long\def\liealgebraSU#1{\ensuremath{\mathfrak{su}\left(#1\right)}}

\global\long\def\sgn{\operatorname{sgn}}

\global\long\def\dualmatrix#1{\left\llbracket \left\llbracket #1\right\rrbracket \right\rrbracket }

\global\long\def\quatset{\ensuremath{\mathbb{H}}}
 %

\global\long\def\unitquatgroup{\ensuremath{\text{Spin}(3)}}
 %

\global\long\def\unitquatset{\mathcal{S}^{3}}
 %

\global\long\def\dualquatset{\mathbb{H}\otimes\mathbb{D}}
 %
{} 

\global\long\def\unitdualquatgroup{\text{Spin}(3)\ltimes\mathbb{R}^{3}}

\global\long\def\unitdualquatset{\dq S}
{}

\begin{frontmatter}{}

\title{Hybrid Kinematic Control for Rigid Body Pose Stabilization using
Dual Quaternions}

\tnotetext[t1]{\copyright 2017. This manuscript version is made available under the CC BY-NC-ND 4.0 license \url{http://creativecommons.org/licenses/by-nc-nd/4.0/}. Supplementary material associated with this article can be found, in the online version, at 10.1016/j.jfranklin.2017.01.028.}

\author[unb]{Hugo T. M. Kussaba\corref{cor1}}

\ead{kussaba@lara.unb.br}

\author[unb]{Luis F. C. Figueredo }

\author[unb]{João Y. Ishihara\fnref{ucla} }

\author[ufmg]{Bruno V. Adorno }

\cortext[cor1]{Corresponding author}

\fntext[ucla]{A portion of this work was completed while the author was with Department
of Electrical Engineering, University of California, Los Angeles (UCLA)
\textendash{} 90095, California }

\address[unb]{Department of Electrical Engineering, University of Brasília (UnB)
\textendash{} 70910-900, Brasília, DF, Brazil}

\address[ufmg]{Department of Electrical Engineering, Federal University of Minas
Gerais (UFMG) \textendash{} 31270-010, Belo Horizonte, MG, Brazil
}
\begin{abstract}
In this paper, we address the rigid body pose stabilization problem
using dual quaternion formalism. We propose a hybrid control strategy
to design a switching control law with hysteresis in such a way that
the global asymptotic stability of the closed-loop system is guaranteed
and such that the global attractivity of the stabilization pose does
not exhibit chattering, a problem that is present in all discontinuous-based
feedback controllers. Using numerical simulations, we illustrate the
problems that arise from existing results in the literature\textemdash as
unwinding and chattering\textemdash and verify the effectiveness of
the proposed controller to solve the robust global pose stability
problem. 
\end{abstract}
\begin{keyword}
Dual quaternion \sep Rigid body stabilization \sep Geometric control
\end{keyword}

\end{frontmatter}{}

\section{Introduction}

Rigid body motion and its control have been extensively investigated
in the last forty years because of its applications in the theory
of mechanical systems, such as robotic manipulators, satellites and
mobile robots. Research has largely focused on the study of models
and control strategies in the Lie group of rigid body motions $\SE 3$
and its subgroup $\SO 3$ of proper rotations \cite{BB:00,AD:98,BM:95,Selig:07}. 

Although it is usual to design attitude and rigid motion controllers
for mechanical systems respectively using rotation matrices and homogeneous
transformation matrices (HTM) \cite{BM:95}, it has been noted by
some authors that a non-singular representation, namely the unit quaternion
group $\unitquatgroup$ for rotations and the unit dual quaternions
$\unitdualquatgroup$ for rigid motions can bring computational advantages
\cite{Selig:07,Ador:11,WZ:14}. 

In scenarios where the state space of the dynamical system is not
the Euclidean space $\realset^{n}$ but a general differentiable manifold
$\mathcal{M}$\textemdash which is the case of $\SE 3$ and $\unitdualquatgroup$\textemdash some
difficulties in designing a stabilizing closed-loop controller may
arise. For instance, the topology of $\mathcal{M}$ may obstruct the
existence of a globally asymptotically stable equilibrium point in
any continuous vector field defined on $\mathcal{M}$: in \cite{BB:00}
it is proved that if $\mathcal{M}$ has the structure of a vector
bundle over a compact manifold $\mathcal{L}$, then no continuous
vector field on $\mathcal{M}$\textemdash indeed, nor in $\mathcal{L}$\textemdash has
a globally asymptotically stable equilibrium. In particular, this
means that it is impossible to design a continuous feedback that globally
stabilizes the pose of a rigid body, as in this case the closed-loop
system state space manifold is a trivial bundle over the compact manifold
$\SO 3$. The same topological obstruction is also present in the
group of unit dual quaternions since its underlying manifold is a
trivial bundle over the unit sphere $\unitquatset$ (see Theorem~\ref{thm:global_asymptotically_stable_DQ}
in Subsection~\ref{subsec:mathPrel:Unfeasible-Global-Stability}). 

On the other hand, due to the two-to-one covering map $\unitdualquatgroup\rightarrow\SE 3$,
the unit dual quaternion group is endowed with a double representation
for every pose in $\SE 3$. Neglecting the double covering yields
to the problem of unwinding whereby solutions close to the desired
pose in $\SE 3$ may travel farther to the antipodal unit dual quaternion
representing the same pose \cite{HWL:08}. There are few works on
unwinding avoidance in the context of pose stabilization using unit
dual quaternions \cite{HWL:08,HWL:08b,HWLS:08,WY:13}. All of them
are based on a discontinuous sign-based feedback approach. 

As shown by \cite{MST:11} for the particular case of $\unitquatgroup$,
the discontinuous sign-based approach may, however, be particularly
sensitive to measurement noises, and despite achieving global stability,
global attractivity properties may be detracted with arbitrarily small
measurement noises. As one would expect, the same happens with $\unitdualquatgroup$
and this will be shown in Theorem~\ref{thm:global_asymptotically_stable_DQ}.

As verified in Section~\ref{sec:Prior-work}, in $\unitdualquatgroup$
the lack of robustness is even more relevant as the discontinuity
of the controller not only affects the rotation, but may also disturb
and deteriorate the trajectory of the system translation. In this
context, even extremely small noises may lead to chattering, performance
degradation\textemdash and in the worst case, prevent stability. Summing
up, despite the solid contributions in the literature on dual quaternion
based controllers in the context of rigid body motion stabilization,
tracking, and multi-agent coordination \cite{HWL:08b,HWL:08,HWLS:08,WY:10,WY:11,WHYZ:12,WYL:12,WY:13},
control of manipulators and human-robot interaction \cite{Ador:10,PPAF:10,FAIB:13,2014_Figueredo_Adorno_Ishihara_Borges__IROS,2015_Adorno_Bo_Fraisse_ROB,2015_Marinho_Figueredo_Adorno__IROS},
and satellite and spacecraft tracking \cite{FT:13,FT:14,FKT:15},
it is important to emphasize that existing pose controllers are either
stable only locally (as we show in Section~\ref{sec:Mathematical-Preliminaries})
or have lack of robustness in the sense that they are sensitive to
arbitrarily small measurement noises (as we illustrate in Section~\ref{sec:Numerical-simulations}).
In other words, the topological constraints from $\unitdualquatgroup$,
most of them inherited from $\SE 3$, still pose a challenge\textemdash the
extension from results on attitude control to the problem of pose
control is not trivial\textemdash{} and there exists no result in
the literature that ensures \textbf{robust} global stability. 

\subsection*{\emph{Contributions}}

In this paper, a generalized robust hybrid control strategy for the
global stabilization of rigid motion kinematics within unit dual quaternions
framework is proposed. The strategy stems from the idea of the hybrid
kinematic control law with hysteresis switching proposed in \cite{MST:11}
to solve the non-robustness issue for quaternions. It is important
to emphasize that, albeit some algebraic identities in quaternion
algebra can be easily carried over to the dual quaternion algebra
by the principle of transference \cite{Chev:96,Selig:07}, the proposed
generalization does not follow by the principle of transference. In
fact, counterexamples shown in \cite{Chev:96} illustrate the failure
of the transfer principle outside the algebraic realm.

In summary, whereas unit quaternions are used to model only attitude
and perform a double cover for the Lie group $\SO 3$, unit dual quaternions
model the coupled attitude and position and perform a double cover
for the Lie group $\SE 3$. The necessity of different procedures
for quaternion and dual quaternion stems from their different topologies
and group structures. For example, the unit quaternion group is a
compact manifold, whereas the unit dual quaternion group is not a
compact manifold. This reflects in the use of distinct approaches
to controller design (see for instance \cite{BM:95}). It is also
interesting to highlight that due to $\SO 3$ being compact, it has
a natural bi-invariant metric, but the same can not be said from $\SE 3$
as it does not possess any bi-invariant metric. The unit dual quaternion
group is not a subgroup from $\unitquatgroup$\textemdash it is indeed
the other way around\textemdash and boundedness, geodesic distance,
norm properties, and other manifold features that are valid on $\unitquatset$
cannot be directly carried to $\unitdualquatgroup$. In this sense,
the extension of control results to $\unitdualquatgroup$ is not trivial,
which is reflected by the gap between quaternion based results and
dual quaternion based controllers\textemdash where the double covering
map is often neglected \cite{WY:10,WY:11,WYL:12,WHYZ:12,WZ:14}. To
overcome this context, we introduce a novel Lyapunov function that
exploits the algebraic constraints inherent from the unit dual quaternion
manifold, and a new robust hybrid stabilization controller for rigid
motion using $\unitdualquatgroup$ is derived.

\subsection*{\textit{Notation}}

Lowercase bold letters represent quaternions, such as $\quat q$.
Underlined lower case bold letters represent dual quaternions, such
as $\dq q$. The following notations will also be used:
\begin{flushleft}
\begin{tabular}{ll}
$\realset$ & set of real numbers;\tabularnewline
$\mathbb{R}_{\ge0}$ & set of non-negative real numbers;\tabularnewline
$\closedballset$ & closed unit ball in the Euclidean norm;\tabularnewline
$\quatset$ & set of quaternions;\tabularnewline
$\mathbb{H}_{0}$ & set of pure imaginary quaternions;\tabularnewline
$\dualquatset$ & set of dual quaternions;\tabularnewline
$\SO 3$ & $3$-dimensional Lie group of rotations;\tabularnewline
$\SE 3$ & $3$-dimensional Lie group of rigid body motions;\tabularnewline
$\unitquatgroup$ & Lie group of unit norm quaternions;\tabularnewline
$\unitdualquatgroup$ & Lie group of unit norm dual quaternions;\tabularnewline
$\unitquatset$ & underlying manifold of unit norm quaternions;\tabularnewline
$\unitdualquatset$ & underlying manifold of unit norm dual quaternions; \tabularnewline
$\mathcal{KL}$ & class of continuous functions $\beta:\mathbb{R}_{\ge0}\times\mathbb{R}_{\ge0}\rightarrow\mathbb{R}_{\ge0}$ \tabularnewline
 & such that for each fixed $s$, the function $\beta\left(r,s\right)$
is \tabularnewline
 & strictly increasing and $\beta\left(0,s\right)=0$ and for each fixed
$r$,\tabularnewline
 & the function $\beta\left(r,s\right)$ is decreasing and $\lim_{s\rightarrow\infty}\beta\left(r,s\right)=0$;\tabularnewline
$\|\cdot\|$ & Euclidean norm;\tabularnewline
$\quat u\cdot\quat v$ & dot product between pure imaginary quaternions $\quat u$ and $\quat v$;\tabularnewline
$\quat u\times\quat v$ & cross product between pure imaginary quaternions $\quat u$ and $\quat v$;\tabularnewline
$\overline{\mathrm{co}}(\cdot)$ & closure of the convex hull;\tabularnewline
$X+Y$ & Minkowski sum between the sets $X$ and $Y$;\tabularnewline
$x^{+}$ & denotes the next state of the hybrid system after a jump;\tabularnewline
\end{tabular}
\par\end{flushleft}

\section{Preliminaries \label{sec:Mathematical-Preliminaries}}

In this section, we provide for the reader basic concepts and a brief
theoretical background regarding quaternions and dual quaternion representation
for rigid body motion. We also present the topological constraints\textemdash which
affect any mathematical structure that represents rigid motion\textemdash imposed
by dual quaternions.

\subsection{Quaternions \label{subsec:Quaternions}}

The quaternion algebra is a four-dimensional associative division
algebra over $\realset$ introduced by Hamilton \cite{Hami:1844}
to algebraically express rotations in the three-dimensional space.
The elements $1$,$\imi,\imj,\imk$ are the basis of this algebra,
satisfying 
\[
\imi^{2}=\imj^{2}=\imk^{2}=\imi\imj\imk=-1,
\]
and the set of quaternions is defined as
\[
\smash{\mathbb{H}\triangleq\biggl\{\eta+\text{\ensuremath{\imi}}\mu_{1}+\imj\mu_{2}+\imk\mu_{3}:\eta,\mu_{1},\mu_{2},\mu_{3}\in\mathbb{R}\biggr\}.}
\]
 Consider a quaternion $\quat q=\eta+\text{\ensuremath{\imi}}\mu_{1}+\imj\mu_{2}+\imk\mu_{3}$;
for ease of notation, it may be denoted as
\[
\smash{\quat q=\eta+\quat{\mu},\quad\text{with }\quad\quat{\mu}=\text{\ensuremath{\imi}}\mu_{1}+\imj\mu_{2}+\imk\mu_{3}.}
\]
In addition, it may be decomposed into a real component and an imaginary
component: $\real{\quat q}\triangleq\eta$ and $\imag{\quat q}\triangleq\quat{\mu}$,
such that $\quat q=\real{\quat q}+\imag{\quat q}$. The quaternion
conjugate is given by $\quat q^{*}\triangleq\real{\quat q}-\imag{\quat q}$.
Pure imaginary quaternions are given by the set 
\[
\mathbb{H}_{0}\triangleq\left\{ \quat q\in\mathbb{H}:\;\real{\quat q}=0\right\} 
\]
and are very convenient to represent vectors of $\mathbb{R}^{3}$
within the quaternion formalism by means of a trivial isomorphism,
which implies $\mathbb{H}_{0}\cong\mathbb{R}^{3}$. Both cross product
and dot product are defined for elements of $\mathbb{H}_{0}$ and
they are analogous to their counterparts in $\mathbb{R}^{3}$. More
specifically, given $\quat u,\quat v\in\mathbb{H}_{0}$, the dot product
is defined as 
\begin{align*}
\quat u\cdot\quat v & \ \smash{\triangleq\ }\smash{-\frac{\quat u\quat v+\quat v\quat u}{2}},
\end{align*}
and the cross product is given by
\[
\quat u\times\quat v\ \smash{\triangleq\ }\smash{\frac{\quat u\quat v-\quat v\quat u}{2}}.
\]
 The quaternion norm is defined as $\norm{\quat q}{\triangleq}\sqrt{\quat q\quat q^{*}}.$
Unit quaternions are defined as the quaternions that lie in the subset
\[
\unitquatset\triangleq\left\{ \quat q\in\quatset:\;\norm{\quat q}=1\right\} .
\]

The set $\unitquatset$ forms, under multiplication, the Lie group
$\unitquatgroup$, whose identity element is $1$ and group inverse
is given by the quaternion conjugate $\quat q^{\ast}$.

Analogously to the way complex numbers are used to represent rotations
in the plane, unit quaternions represent rotations in the three-dimensional
space. Indeed, an arbitrary rotation $\theta$ around an axis $\myvec n=\imi n_{x}+\imj n_{y}+\imk n_{z}$
is represented by the unit quaternion $\quat r=\cos\left(\theta/2\right)+\sin\left(\theta/2\right)\myvec n$.
Furthermore, since the unit quaternion group double covers the rotation
group $\SO 3$, the unit quaternion $-\quat r$ also represents the
same rotation associated to $\quat r$ \cite{Selig:07}. 

\subsection{Dual Quaternions}

Similarly to how the quaternion algebra was introduced to address
rotations in the three-dimensional space, the dual quaternion algebra
was introduced by Clifford and Study \cite{Clif:1873,Stud:1891} to
describe rigid body movements. This algebra is constituted by the
set
\[
\smash{\dualquatset\triangleq\left\{ \quat q+\dual\quat q':\;\quat q,\quat q'\in\quatset\right\} ,}
\]
where $\dual$ is called the dual unit and is nilpotent\textemdash that
is, $\dual{\not=}0$, but $\dual^{2}{=}0$. Given $\dq q=\eta+\quat{\mu}+\dual\bigl(\eta'+\quat{\mu}'\bigr),$
we define $\real{\dq q}\triangleq\eta+\dual\eta'$ and $\imag{\dq q}\triangleq\quat{\mu}+\dual\quat{\mu}'$,
such that $\dq q=\real{\dq q}+\imag{\dq q}$. The dual quaternion
conjugate is $\dq q^{*}\triangleq\real{\dq q}-\imag{\dq q}$.

Under dual quaternion multiplication, the subset of dual quaternions
\begin{equation}
\smash{\unitdualquatset\triangleq\left\{ \quat q+\dual\quat q'\in\dualquatset:\;\norm{\quat q}=1,\,\quat q\quat q'^{*}+\quat q'\quat q^{*}=0\right\} ,}\label{eq:DQ_Group_Description}
\end{equation}
forms a Lie group \cite{WHYZ:12} called unit dual quaternions group
$\unitdualquatgroup$, whose identity is $1$ and group inverse is
the dual quaternion conjugate. 

An arbitrary rigid displacement characterized by a rotation $\quat r\in\unitquatgroup$,
with $\quat r=\cos\left(\theta/2\right)+\sin\left(\theta/2\right)\myvec n$,
followed by a translation $\quat p\in\mathbb{H}_{0}$, with $\quat p=p_{x}\imi+p_{y}\imj+p_{z}\imk$,
is represented by the unit dual quaternion \cite{Ador:11,HWLS:08}\footnote{Similarly, the rigid motion could also be represented by a translation
$\overline{\quat p}$ followed by a rotation $\quat r$ \cite{WHHLL:05}
resulting in the dual quaternion $\dq q=\quat r+(1/2)\dual\overline{\quat p}\quat r$.
Both $\overline{\quat p}$ and $\quat p$ are related by $\overline{\quat p}=\quat r\quat p\quat r^{*}$.}
\[
\smash{\dq q=\quat r+\dual\frac{1}{2}\quat r\quat p.}
\]
 The unit dual quaternions group double covers $\SE 3$ and thus any
displacement $\dq q$ can also be described by $-\dq q$.

\subsection{Description of rigid motion and topological constraints\label{subsec:mathPrel:Unfeasible-Global-Stability}}

Since unit quaternions describe the attitude of a rigid body, they
are used to represent a rotation between the body frame and the inertial
frame. In this sense, the kinematic equation of a rotation represented
by the unit quaternion $\quat q$ is expressed as
\begin{equation}
\dot{\quat q}=\frac{1}{2}\quat q\quat{\omega},\label{eq:Q_kinematics}
\end{equation}
where $\quat{\omega}\in\mathbb{H}_{0}$ is the angular velocity given
in the body frame \cite{Chou:92}. 

Similarly, the unit dual quaternion $\dq q$ describes the coupled
attitude and position. The first order kinematic equation of a rigid
body motion in the inertial frame is given by
\begin{equation}
\dot{\dualquaternion q}=\frac{1}{2}\dualquaternion q\dq{\omega},\label{eq:DQ_kinematics}
\end{equation}
where $\dq{\omega}$ is the twist in body frame given by
\begin{equation}
\dq{\omega}=\quat{\omega}+\dual\left[\dot{\quat p}+\quat{\omega}\times\quat p\right].\label{eq:generalized_spatial_velocity}
\end{equation}

The remarkable similarity between equations (\ref{eq:Q_kinematics})
and (\ref{eq:DQ_kinematics}) is due to the principle of transference,
whose various forms as stated in \cite{Chev:96} can be summarized
in modern terms as \cite[Sec 7.6]{Selig:07}: ``All representations
of the group $\SO 3$ become representations of $\SE 3$ when tensored
with the dual numbers.'' This means that several properties and algebraic
identities of $\SO 3$ and the quaternions can be carried to $\SE 3$
and the dual quaternions algebra, respectively. 

The principle of transference may mislead one to think that every
theorem in quaternions can be transformed to another theorem in dual
quaternions by a transference process. However, this is not the case,
as shown by counterexamples in \cite{Chev:96}. Therefore, properties
and phenomena related to quaternion motions like topological obstructions
and unwinding may not follow by direct use of transference and have
to be verified for dual quaternions. 

We first verify that $\text{Spin}(3)\ltimes\mathbb{R}^{3}$ presents
an obstruction for the global asymptotic stability of a continuous
vector field on its underlying manifold.
\begin{thm}
\label{thm:global_asymptotically_stable_DQ} Let $f$ be a continuous
vector field defined on the underlying manifold $\unitdualquatset$
of the Lie group of unit dual quaternions. Then there exists no equilibrium
point of $f$ that is globally asymptotically stable. 
\end{thm}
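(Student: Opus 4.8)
The plan is to deduce the statement from the topological obstruction established in \cite{BB:00}: a manifold that carries the structure of a vector bundle over a compact base admits no continuous vector field possessing a globally asymptotically stable equilibrium. Thus it suffices to exhibit $\unitdualquatset$ as such a bundle over the compact manifold $\unitquatset$. First I would unpack the defining relations in \eqref{eq:DQ_Group_Description}. Writing an element of $\unitdualquatset$ as $\quat q+\dual\quat q'$, the primary part satisfies $\norm{\quat q}=1$, so $\quat q\in\unitquatset$; and since $(\quat q\quat q'^{*})^{*}=\quat q'\quat q^{*}$, the relation $\quat q\quat q'^{*}+\quat q'\quat q^{*}=0$ is equivalent to $\real{\quat q\quat q'^{*}}=0$, i.e. $\quat q\quat q'^{*}\in\mathbb{H}_{0}$. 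For fixed $\quat q\in\unitquatset$ the map $\quat q'\mapsto\quat q\quat q'^{*}$ is an $\realset$-linear automorphism of $\quatset\cong\realset^{4}$ (conjugation followed by left multiplication by a unit quaternion), so the set of admissible $\quat q'$ is the preimage of the $3$-plane $\mathbb{H}_{0}$, hence a $3$-dimensional linear subspace depending smoothly on $\quat q$. Therefore the projection $\quat q+\dual\quat q'\mapsto\quat q$ realizes $\unitdualquatset$ as a rank-$3$ vector bundle over $\unitquatset$.

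Next I would make this bundle explicit, and in fact globally trivial. Using the pose parametrization recalled above, the map $(\quat r,\quat p)\mapsto\quat r+\dual\frac{1}{2}\quat r\quat p$ from $\unitquatset\times\mathbb{H}_{0}$ into $\dualquatset$ has image exactly $\unitdualquatset$: given $\quat r\in\unitquatset$ and $\quat p\in\mathbb{H}_{0}$ one checks that $\quat q\triangleq\quat r$ and $\quat q'\triangleq\frac{1}{2}\quat r\quat p$ satisfy both constraints, while conversely $\quat q+\dual\quat q'\in\unitdualquatset$ corresponds to $\quat r=\quat q$ and $\quat p=2\quat q^{*}\quat q'$, which is pure imaginary precisely because of the constraint $\quat q\quat q'^{*}+\quat q'\quat q^{*}=0$. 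Both this map and its inverse are smooth, so $\unitdualquatset$ is diffeomorphic to $\unitquatset\times\realset^{3}$, a trivial rank-$3$ vector bundle over the compact manifold $\unitquatset$.

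Finally, applying the result of \cite{BB:00} with total space $\unitdualquatset$ and compact base $\unitquatset$, any continuous vector field $f$ on $\unitdualquatset$ fails to have a globally asymptotically stable equilibrium, which is the claim. (Alternatively one could argue directly that a globally asymptotically stable equilibrium of a continuous vector field forces the ambient manifold to be contractible, whereas $\unitdualquatset$ deformation retracts onto $\unitquatset$, which is not contractible.) I do not expect any genuinely hard analysis here; the only delicate point is the algebraic bookkeeping above, namely verifying that the quadratic relation $\quat q\quat q'^{*}+\quat q'\quat q^{*}=0$ cuts out a smoothly varying $3$-plane rather than something degenerate, and that the pose parametrization and its stated inverse are mutually inverse and smooth, in particular that $2\quat q^{*}\quat q'$ indeed lies in $\mathbb{H}_{0}$. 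Once the vector-bundle picture is in place, the theorem is immediate from the cited obstruction.
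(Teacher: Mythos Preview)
Your proposal is correct and follows essentially the same approach as the paper: both arguments show that the defining constraints of $\unitdualquatset$ make the fiber over each $\quat q\in\unitquatset$ a $3$-dimensional linear subspace, exhibit $\unitdualquatset$ as the trivial rank-$3$ bundle $\unitquatset\times\realset^{3}$ over the compact base $\unitquatset$, and then invoke the obstruction of \cite{BB:00}. Your version is slightly more explicit, writing down the global trivialization $(\quat r,\quat p)\mapsto\quat r+\dual\frac{1}{2}\quat r\quat p$ and its inverse, whereas the paper obtains the product structure via the orthogonality relation $\eta\eta'+\quat{\mu}\cdot\quat{\mu}'=0$ and a citation; but the substance is the same.
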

\begin{proof}
For an arbitrary unit dual quaternion element $\dq q\in\dq S$, with
$\dq q=\quat q+\dual\quat q'=\eta+\quat{\mu}+\dual\left(\eta'+\quat{\mu}'\right)$,
as defined in (\ref{eq:DQ_Group_Description}), it is possible to
verify by direct calculation that the constraint \textbf{$\quat q\quat q'^{*}+\quat q'\quat q^{*}=0$}
implies 
\begin{equation}
\eta\eta'+\quat{\mu}\cdot\quat{\mu}'=0.\label{eq:orthonormal_spaces}
\end{equation}
Furthermore, since $\norm{\quat q}=1$, then $\quat q$ lies in $\unitquatset$.
In addition, $\mathbb{H}$ is isomorphic to $\mathbb{R}^{4}$ as a
vector space, which implies that $\quat q'\in\mathbb{H}$ lies in
a three-dimensional hyperplane, with $\quat q$ being its normal vector,
due to constraint (\ref{eq:orthonormal_spaces}). In this sense, $\unitdualquatset$
can be regarded as the product manifold $\unitquatset\times\mathbb{R}^{3}$
\cite{1990_Mccarthy_BOOK}. 

The product $\unitquatset\times\mathbb{R}^{3}$ equipped with the
projection $\unitquatset\times\mathbb{R}^{3}\rightarrow\unitquatset$
given by $\dq q\mapsto\quat q$ yields a vector bundle $\unitquatset\times\mathbb{R}^{3}$
onto $\unitquatset$, namely the trivial bundle \cite{Lee:13}. Since
$\unitquatset$ is compact, it follows from Theorem~1 of \cite{BB:00}
(for the reader's convenience, this theorem is reproduced in Theorem~\ref{thm:BB:00}
of the Appendix) that there is no equilibrium point of any continuous
vector field $f$ that is globally asymptotically stable.
\end{proof}

\section{Prior work on pose stabilization\label{sec:Prior-work}}

Due to the topological constraint described in Theorem~\ref{thm:global_asymptotically_stable_DQ},
there is no continuous state feedback controller on $\unitdualquatset$
that can globally asymptotically stabilize (\ref{eq:DQ_kinematics})
to a rest configuration. Indeed, the two-to-one covering map from
$\unitdualquatgroup$ to $\SE 3$ renders a closed-loop system with
two distinct equilibria $\dq{q_{e}}$ and $-\dq{q_{e}}$. 

Since both $\pm\dq{q_{e}}$ correspond to the same configuration in
$\SE 3$, solutions neglecting the double cover (see, for example,
\cite{PPAF:10,WY:10,WY:11,WHYZ:12,WYL:12,FAIB:13}) are expected to
exhibit the unwinding phenomenon \cite{BB:00}, that is, solutions
starting arbitrarily close to the desired pose in $\SE 3$\textemdash represented
by both stable and unstable points in $\unitdualquatgroup$\textemdash may
travel to the farther stable point instead to the nearest unstable
point (see, for example, Fig.$\,$\ref{fig:Unwinding}). The sole
contributions in the sense of avoiding the unwinding and stabilizing~(\ref{eq:DQ_kinematics})
to the set $\{\pm1\}$ are based on a pure discontinuous control law
introduced in \cite{HWL:08b,HWL:08,HWLS:08}. In terms of the components
of $\dq q=\eta+\quat{\mu}+\dual\bigl(\eta'+\quat{\mu}'\bigr)$, this
discontinuous control law is given by\footnote{The discontinuous kinematic control law in \cite{HWL:08b,HWL:08,HWLS:08}
contains a typo that has been fixed in \cite{WY:13}. Different from
(\ref{eq:Han_controller}), in \cite{HWL:08b,HWL:08,HWLS:08,WY:13}
the controller is expressed in terms of the logarithm of a unit dual
quaternion.}
\begin{equation}
\dq{\omega}{=}\begin{cases}
-2k\left[\mathrm{acos}(\eta)\frac{\quat{\mu}}{\norm{\quat{\mu}}}{+}\dual\quat v\right], & \text{ if \ensuremath{\eta{\ge}0}},\\
-2k\left[\left(\mathrm{acos}(\eta)-\pi\right)\frac{\quat{\mu}}{\norm{\quat{\mu}}}{+}\dual\quat v\right], & \text{\text{ if \ensuremath{\eta{<}0}}},
\end{cases}\label{eq:Han_controller}
\end{equation}
where $\quat v=\eta\quat{\mu}'{-}\eta'\quat{\mu}{-}\quat{\mu}\times\quat{\mu}'$
and $k$ is a proportional gain.

Albeit this control law avoids unwinding, a careful look reveals a
strong sensitivity around attitudes that are up to $\pi$ away from
the desired attitude about some axis\textemdash that is, $\eta=0$.
In view of Theorem~2.6 of \cite{SMET:06}, one can see that such
control law isn't robust in the sense that arbitrarily small measurement
noises can force $\eta$ to stay near to $0$ for initial conditions
within its neighborhood. Indeed, similar to Theorem~3.2 of \cite{MST:11},
one can even exhibit an explicit noise signal to persistently trap
the system about a fixed pose, thus preventing its stability. To illustrate
the sensitivity of pure discontinuous state feedback controllers,
we introduce a simple case study in which the trajectory of (\ref{eq:DQ_kinematics})
is simulated using the discontinuous control law (\ref{eq:Han_controller})
in the presence of a random measurement noise\footnote{The simulation has been performed in accordance with the procedures
described in Section~\ref{sec:Numerical-simulations}.}\textemdash the results are shown in Fig.$\,$\ref{fig:Pure Discontinuous Controller - Chattering - raster version}.
The trajectory of the closed-loop system exhibits chattering in the
neighborhood of the discontinuity\textemdash that lies in $\eta=0$\textemdash as
a result of the measurement noise. The performance degradation stems
from infinitely fast switches in (\ref{eq:Han_controller}). Furthermore,
the chattering influence over the system is not restricted to the
attitude error and may also impact on the resulting trajectory of
the translation, as shown in Fig.$\,$\ref{fig:Pure Discontinuous Controller - Chattering - raster version}(b).
In this sense, the lack of robustness of a discontinuous solution
may lead to chattering in orientation and to additional disturbances
in the translation of the rigid motion in the presence of arbitrarily
small random noises. 

\begin{figure}
\begin{centering}
\includegraphics[width=0.93\columnwidth]{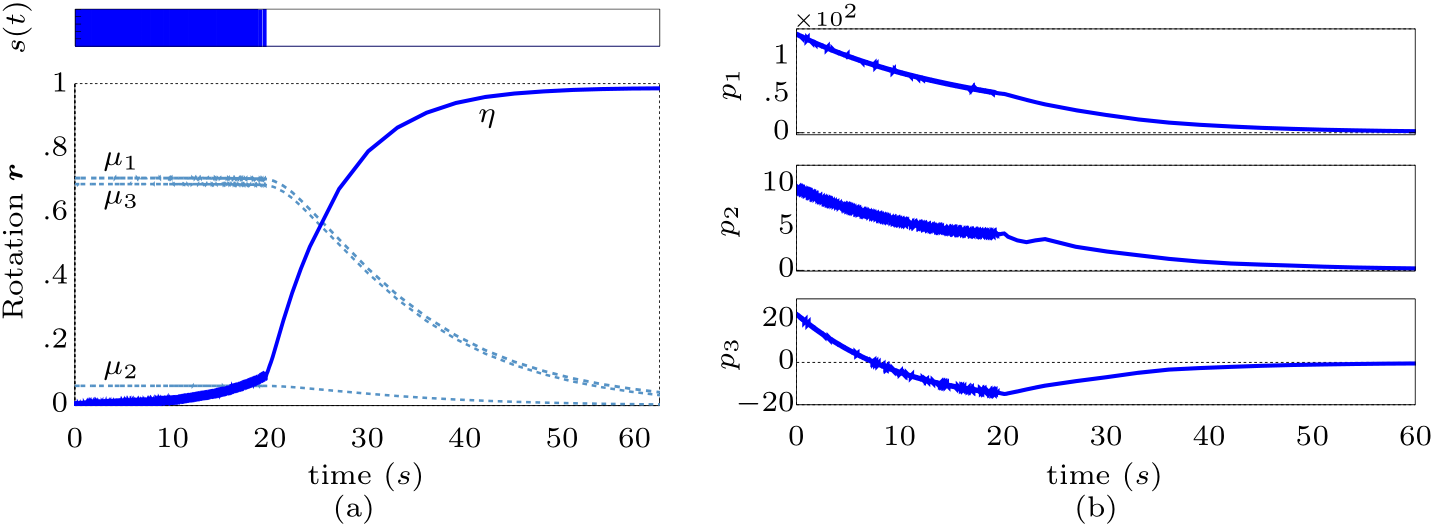}
\par\end{centering}
\caption{(\emph{a}) Trajectory of the rotation unit quaternion $\protect\quat r$
in terms of $\eta$ and $\protect\myvec{\mu}$ (dashed line) with
switches along time between the discontinuous control laws in (\ref{eq:Han_controller})
represented by $s(t)$. (\emph{b}) Trajectory of the three-dimensional
translation elements $\protect\myvec p=p_{1}\protect\imi+p_{2}\protect\imj+p_{3}\protect\imk$.
\label{fig:Pure Discontinuous Controller - Chattering - raster version}}
\end{figure}

\section{Kinematic hybrid control law for robust global pose stability\label{sec:Stability}}

In this section, we address the control design problem for globally
stabilizing a rigid body coupled rotational and translational kinematics
with no representation singularities. The proposed solution copes
with the topological constraint inherent from the $\unitdualquatgroup$
parametrization while also ensuring robustness against measurement
noises. 

To avoid the unwinding phenomenon and the lack of robustness from
pure discontinuous solutions, we appeal to the hybrid system formalism
of \cite{GST:09}. This formalism combines both continuous-time and
discrete-time dynamics and is useful to formally analyze hysteresis-based
control laws, such as the proposed solution. A hybrid system is given
by the constrained differential inclusions
\begin{gather}
\begin{split}\dot{x}\in F\left(x\right), & \quad x\in C,\\
x^{+}\in G\left(x\right), & \quad x\in D,
\end{split}
\label{eq:hybrid-systems-notation}
\end{gather}
where $x^{+}$ denotes the next state of the hybrid system after a
jump. The flow map $F$ and the jump map $G$ are set-valued functions,
respectively, modeling the continuous and the discrete time dynamics
of the system. The flow set $C$ and the jump set $D$ are the respective
sets where the evolution occur. The following concepts of set-valued
analysis will also be used: a set-valued mapping $F$ is \textit{\emph{outer}}\textit{
}\textit{\emph{semicontinuous}} if its graph is closed and $F$ is
\textit{\emph{locally}}\textit{ }\textit{\emph{bounded}}\textemdash that
is, if for any compact set $K$, there exists $m>0$ such that $F\left(K\right)\subset m\closedballset$,
where $\closedballset$ is the closed unit ball in the Euclidean space
of the convenient dimension \cite{RW:98}. For more details, the reader
is referred to \cite{GST:09,GST:12}.

To solve the problem of robust global asymptotic stabilization of
(\ref{eq:DQ_kinematics}), we propose a generalization to the hysteresis-based
hybrid control law of \cite{MST:11} that extends the attitude stabilization
to render both coupled kinematics\textemdash attitude and translation\textemdash stable.
The proposed control law is defined as
\begin{equation}
\dq{\omega}\triangleq-k_{1}h\quat{\mu}-\dual k_{2}\eta\quat{\mu}',\label{eq:hybrid control law - twist}
\end{equation}
 where $k_{1},k_{2}\in\realset_{+}^{*}$ are control gains and $h\in\{-1,1\}$
is a memory state with hysteresis characterized by a parameter $\delta\in\left(0,1\right)$.
The memory state $h$ has its dynamics defined by
\begin{gather}
\begin{split}\dot{h}\triangleq0, & \quad\text{when }\left(\dualquaternion q,h\right)\text{ are such that }\ensuremath{h\eta\ge-\delta,}\\
h^{+}\in\overline{\sgn}\left(\eta\right), & \quad\text{when }\left(\dualquaternion q,h\right)\text{ are such that }h\eta\leq-\delta,
\end{split}
\label{eq:hysteresis_parameter_def}
\end{gather}
where $\overline{\sgn}$ is the set-valued function defined as
\[
\overline{\sgn}\left(s\right)\triangleq\begin{cases}
\left\{ 1\right\} , & s>0,\\
\left\{ -1\right\} , & s<0,\\
\left\{ -1,1\right\} , & s=0.
\end{cases}
\]
In terms of the hybrid formalism (\ref{eq:hybrid-systems-notation}),
the closed loop system made by (\ref{eq:DQ_kinematics}), (\ref{eq:hybrid control law - twist})
and (\ref{eq:hysteresis_parameter_def}) is characterized as
\begin{flalign}
 & \begin{aligned}F\left(\dualquaternion q,h\right)=\left(\frac{1}{2}\dualquaternion q\dq{\omega},0\right),C=\left\{ \left(\dualquaternion q,h\right)\in\left(\unitdualquatgroup\right)\times\{-1,1\}:\ h\eta\ge-\delta\right\} ,\\
G\left(\dualquaternion q,h\right)\in\left(\dualquaternion q,\overline{\sgn}\left(\eta\right)\right),D=\left\{ \left(\dualquaternion q,h\right)\in\left(\unitdualquatgroup\right)\times\{-1,1\}:\ h\eta\le-\delta\right\} ,
\end{aligned}
\label{eq:hybrid sets}
\end{flalign}
where $\dq{\omega}$ is defined as in (\ref{eq:hybrid control law - twist})
and $h$ as in (\ref{eq:hysteresis_parameter_def}).

Consider $\quat{\mu}=\imi\mu_{1}+\imj\mu_{2}+\imk\mu_{3}$ and $\quat{\mu'}=\imi\mu'_{1}+\imj\mu'_{2}+\imk\mu'_{3}$.
The map from $\dualquatset$ to $\mathbb{R}^{8}$ given by
\begin{equation}
\dq q=\eta+\myvec{\mu}+\dual\left(\eta'+\myvec{\mu}'\right)\mapsto\vector\left(\dq q\right)=[\eta,\mu_{1},\mu_{2},\mu_{3},\eta',\mu'_{1},\mu'_{2},\mu'_{3}]^{T}\label{eq:vec_map_def}
\end{equation}
is a vector space isomorphism whose inverse will be denoted by $\underline{\vector}$.

The Hamilton operator \cite{1990_Mccarthy_BOOK,Ador:11} provides
a matrix representation for the algebraic multiplication through the
map $\hami +:\dualquatset\rightarrow\mathbb{R}^{8\times8}$ satisfying
\begin{equation}
\vector\left(\dq q_{1}\dq q_{2}\right)=\hamilton +{\dq q_{1}}\vector\left(\dq q_{2}\right)\label{eq:hamilton_operator}
\end{equation}
for any $\dq q_{1},\dq q_{2}\in\dualquatset$. Explicitly, the Hamilton
operator is given by
\[
\dq q=\quat q+\dual\quat q'\mapsto\hamilton +{\dq q}=\begin{bmatrix}\hami +_{4}(\quat q) & 0_{4}\\
\hami +_{4}(\quat q') & \hami +_{4}(\quat q)
\end{bmatrix},
\]
where $\hami +_{4}:\quatset\rightarrow\mathbb{R}^{4\times4}$ is the
map
\[
\quat q=\eta+\imi\mu_{1}+\imj\mu_{2}+\imk\mu_{3}\mapsto\hami +_{4}(\quat q)=\begin{bmatrix}\eta & -\mu_{1} & -\mu_{2} & -\mu_{3}\\
\mu_{1} & \eta & -\mu_{3} & \mu_{2}\\
\mu_{2} & \mu_{3} & \eta & -\mu_{1}\\
\mu_{3} & -\mu_{2} & \mu_{1} & \eta
\end{bmatrix}.
\]

Let $\myvec x=(x_{1},\ldots,x_{8})\in\mathbb{R}^{8}$ and $y\in\mathbb{R}$.
Based on (\ref{eq:vec_map_def}) and (\ref{eq:hamilton_operator}),
the maps $F$ and $G$ of (\ref{eq:hybrid sets}) induce the function
$\vec{F}:\mathbb{R}^{9}\rightarrow\mathbb{R}^{9}$ and the set-valued
mapping $\vec{G}:\mathbb{R}^{9}\rightrightarrows\mathbb{R}^{9}$ given
by 
\begin{equation}
\vec{F}(\myvec x,y)=\left(\frac{1}{2}\hamilton +{\underline{\vector}(\myvec x)}\vector\left(\dq{\omega}\right),0\right),\ \vec{G}(\myvec x,y)\in\left(\myvec x,\overline{\sgn}\left(x_{1}\right)\right),\label{eq:hybrid maps vec}
\end{equation}
where
\[
\vector\left(\dq{\omega}\right)=[0,-k_{1}hx_{2},-k_{1}hx_{3},-k_{1}hx_{4},0,-k_{2}x_{1}x_{6},-k_{2}x_{1}x_{7},-k_{2}x_{1}x_{8}]^{T}.
\]
Similarly, the sets $C$ and $D$ of (\ref{eq:hybrid sets}) induce
the subsets $\vec{C}$ and $\vec{D}$ of $\mathbb{R}^{9}$ given by
\begin{gather}
\begin{split}\vec{C}=\left\{ \left(\myvec x,y\right)\in\mathbb{R}^{8}\times\mathbb{R}:\left(\myvec x,y\right)\in\unitdualquatset\times\{-1,1\}\text{ and }yx_{1}\ge-\delta\right\} ,\\
\vec{D}=\left\{ \left(\myvec x,y\right)\in\mathbb{R}^{8}\times\mathbb{R}:\left(\myvec x,y\right)\in\unitdualquatset\times\{-1,1\}\text{ and }yx_{1}\le-\delta\right\} .
\end{split}
\label{eq:hybrid sets vec}
\end{gather}

The following lemma proves that the hybrid system induced by (\ref{eq:DQ_kinematics}),
(\ref{eq:hybrid control law - twist}) and (\ref{eq:hysteresis_parameter_def})
satisfies some properties which helps to prove the stability of the
system and its robustness.
\begin{lem}
\label{lem:basic-assumptions} The maps $\vec{F}$ and $\vec{G}$
defined on (\ref{eq:hybrid maps vec}) and the sets $\vec{C}$ and
$\vec{D}$ defined on (\ref{eq:hybrid sets vec}) satisfy the following
properties:

\begin{enumerate}
\item $\vec{C}$ and $\vec{D}$ are closed sets in $\mathbb{R}^{9}$.
\item $\vec{F}:\mathbb{R}^{9}\rightarrow\mathbb{R}^{9}$ is continuous.
\item $\vec{G}:\mathbb{R}^{9}\rightrightarrows\mathbb{R}^{9}$ is an outer
semicontinuous set-valued mapping, locally bounded and $\vec{G}\left(\myvec x,h\right)$
is nonempty for each $\left(\myvec x,h\right)\in\vec{D}$.
\end{enumerate}
\end{lem}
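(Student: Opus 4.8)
The plan is to verify the three properties directly from the explicit formulas, handling each in turn. For property~1, I would note that $\unitdualquatset$ is defined in~(\ref{eq:DQ_Group_Description}) by the equations $\norm{\quat q}=1$ and $\quat q\quat q'^{*}+\quat q'\quat q^{*}=0$, which are polynomial (hence continuous) equalities in the coordinates $\myvec x\in\mathbb{R}^{8}$; therefore $\unitdualquatset$ is closed in $\mathbb{R}^{8}$. The finite set $\{-1,1\}$ is closed in $\mathbb{R}$, so $\unitdualquatset\times\{-1,1\}$ is closed in $\mathbb{R}^{9}$. Finally, $yx_{1}\ge-\delta$ defines a closed half-space-type set (preimage of the closed set $[-\delta,\infty)$ under the continuous map $(\myvec x,y)\mapsto yx_{1}$), and likewise $yx_{1}\le-\delta$ is closed; intersecting with the closed set $\unitdualquatset\times\{-1,1\}$ shows that both $\vec{C}$ and $\vec{D}$ are closed.

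For property~2, I would simply observe that each component of $\vec{F}(\myvec x,y)$ is a polynomial in the entries of $\myvec x$ and $y$: the Hamilton operator $\hamilton +{\underline{\vector}(\myvec x)}$ has entries that are linear in $\myvec x$ (read off from the explicit $8\times 8$ block form), the vector $\vector(\dq{\omega})$ has entries that are polynomials of degree at most two in $(\myvec x,y)$ (degree one through $h=y$ in the rotational part, degree two in $x_1 x_j$ in the translational part), and the last coordinate is the constant $0$. A product and sum of polynomials is a polynomial, hence $\vec{F}$ is continuous (indeed smooth) on all of $\mathbb{R}^{9}$.

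For property~3, I would argue that $\vec{G}(\myvec x,y)=\{\myvec x\}\times\overline{\sgn}(x_{1})$ is nonempty for every $(\myvec x,y)$ because $\overline{\sgn}(x_1)$ is one of $\{1\}$, $\{-1\}$, or $\{-1,1\}$, all nonempty. Local boundedness is immediate: for any compact $K\subset\mathbb{R}^9$, the first factor stays in the bounded projection of $K$ and the second factor is always contained in $[-1,1]$, so $\vec{G}(K)$ is bounded. For outer semicontinuity I would show the graph of $\vec{G}$ is closed: take a sequence $(\myvec x_n,y_n,\myvec z_n)$ in the graph converging to $(\myvec x,y,\myvec z)$; then $\myvec z_n=(\myvec x_n,w_n)$ with $w_n\in\overline{\sgn}((x_n)_1)$, so $w_n\in\{-1,1\}$ and, passing to a subsequence, $w_n$ is eventually constant equal to some $w\in\{-1,1\}$, giving $\myvec z=(\myvec x,w)$; it remains to check $w\in\overline{\sgn}(x_1)$, which follows by cases on the sign of $x_1$ (if $x_1>0$ then $(x_n)_1>0$ eventually forces $w_n=1$; similarly for $x_1<0$; if $x_1=0$ then $w\in\{-1,1\}=\overline{\sgn}(0)$ trivially). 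Hence the graph is closed and $\vec{G}$ is outer semicontinuous.

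None of the three parts presents a genuine obstacle; the work is entirely routine once the definitions in~(\ref{eq:DQ_Group_Description}), (\ref{eq:hybrid maps vec}) and~(\ref{eq:hybrid sets vec}) are unwound. The only point requiring a little care is the outer-semicontinuity argument for $\vec{G}$, where one must handle the set-valued jump at $x_1=0$; the key observation making this painless is that $\overline{\sgn}$ is precisely the (closed-graph) set-valued regularization of the sign function, so its graph is closed by construction, and taking the product with the identity map $\myvec x\mapsto\{\myvec x\}$ preserves this.
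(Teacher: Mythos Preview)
Your proposal is correct and follows essentially the same route as the paper: closedness of $\vec C,\vec D$ via preimages of closed sets under polynomial maps together with closedness of $\unitdualquatset\times\{-1,1\}$, continuity of $\vec F$ because its components are polynomials, and outer semicontinuity/local boundedness/nonemptiness of $\vec G$ from the explicit description. Your sequential closed-graph argument for $\vec G$ is in fact more careful than the paper's one-line identification of the graph, but the overall strategy is the same.
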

\begin{proof}
The proof is based on Lemma~5.1 of \cite{MST:11}. Setting $\delta\in(0,1)$,
consider the continuous map $\tau:\mathbb{R}^{9}\rightarrow\mathbb{R}$
given by $\tau\left(x_{1},\ldots,x_{8},y\right)=yx_{1}+\delta$. The
restriction $\tau|_{\unitdualquatset\times\{-1,1\}}:\unitdualquatset\times\{-1,1\}\rightarrow\mathbb{R}$
of this map to $\unitdualquatset\times\{-1,1\}$ is also continuous
\cite[Theorem 8]{Base:13}. Moreover, by the definition of the sets
$\vec{C}$ and $\vec{D}$, we have that
\begin{gather*}
\vec{C}=\tau|_{\unitdualquatset\times\{-1,1\}}^{-1}\left([0,+\infty)\right),\\
\vec{D}=\tau|_{\unitdualquatset\times\{-1,1\}}^{-1}\left((-\infty,0]\right).
\end{gather*}
Since the preimage of a closed set under a continuous mapping is closed,
$\vec{C}$ and $\vec{D}$ are closed in $\unitdualquatset\times\{-1,1\}$.
We also have that $\unitdualquatset\times\{-1,1\}$ is closed in $\mathbb{R}^{9}$.
In fact, consider the continuous functions $p,\,d:\mathbb{R}^{8}\rightarrow\mathbb{R}$
given respectively by
\begin{eqnarray*}
p(\eta,\mu_{1},\mu_{2},\mu_{3},\eta',\mu'_{1},\mu'_{2},\mu'_{3}) & = & [\eta,\mu_{1},\mu_{2},\mu_{3}][\eta,\mu_{1},\mu_{2},\mu_{3}]^{T}-1,\\
d(\eta,\mu_{1},\mu_{2},\mu_{3},\eta',\mu'_{1},\mu'_{2},\mu'_{3}) & = & [\eta,\mu_{1},\mu_{2},\mu_{3}][\eta',\mu'_{1},\mu'_{2},\mu'_{3}]^{T}.
\end{eqnarray*}
By the definition of $p$ and $d$, $\unitdualquatset=p^{-1}(\{0\})\cap d^{-1}(\{0\})$.
Since $\{0\}$ is a closed set of $\mathbb{R}$, the sets $p^{-1}(\{0\})$
and $d^{-1}(\{0\})$ are closed and their intersections are closed.
Thus, $\unitdualquatset$ is closed in $\mathbb{R}^{8}$. Moreover,
the set $\{-1,1\}$ is closed in $\mathbb{R}$, therefore the Cartesian
product $\unitdualquatset\times\{-1,1\}$ is closed in $\mathbb{R}^{9}$.

Since $\unitdualquatset\times\{-1,1\}$ is closed in $\mathbb{R}^{9}$,
$\vec{C}$ and $\vec{D}$ are also closed in $\mathbb{R}^{9}$. On
the account that each component of $\vec{F}$ is a polynomial, the
map $\vec{F}$ is continuous.

The graph of the set-valued mapping $\vec{G}$ is given by $\left\{ (\myvec x,y,z):z\in\vec{G}\left(\myvec x,y\right)\right\} =\mathbb{R}^{8}\times\mathbb{R}\times\mathbb{R}^{8}\times\{-1,1\}$.
Since this set is closed, it follows by definition that $\vec{G}$
is outer semicontinuous.\footnote{The graph of a set-valued mapping $F:X\rightrightarrows Y$ is defined
by $\left\{ (x,y)\in X\times Y:x\in X,\,y\in F(x)\right\} $. $F$
is outer semicontinuous if its graph is a closed set of $X\times Y$
\cite{MST:11}.}

Furthermore, $\vec{G}$ is locally bounded because given any compact
set $K$$\subset\mathbb{R}^{9}$, $\vec{G}(K)\subset K\times\{-1,1\}$
and thus $\vec{G}(K)$ is bounded. Finally, by the definition of $\vec{G}$,
$\vec{G}(\myvec x,y)$ is nonempty for every $(\myvec x,y)\in\vec{D}$. 
\end{proof}
\begin{thm}
With $\dq{\omega}$ defined as in (\ref{eq:hybrid control law - twist}),
the equilibrium points of the closed loop system made by (\ref{eq:DQ_kinematics}),
(\ref{eq:hybrid control law - twist}) and (\ref{eq:hysteresis_parameter_def})
are $\pm1$ and the set $\left\{ \pm1\right\} $ is asymptotically
stable.
\end{thm}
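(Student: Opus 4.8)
The plan is to prove the two assertions separately: a direct algebraic computation for the equilibrium set, followed by a hybrid Lyapunov argument in the style of \cite{MST:11} but built on a Lyapunov function adapted to the unit dual quaternion constraints, and closed with the hybrid invariance principle of \cite{GST:09}. For the equilibria, I would argue that since every $\dq q\in\unitdualquatgroup$ is invertible with inverse $\dq q^{*}$, a rest point of the flow map satisfies $\tfrac{1}{2}\dq q\dq{\omega}=0$ only if $\dq{\omega}=0$; by (\ref{eq:hybrid control law - twist}) this forces $h\quat{\mu}=0$ and $\eta\quat{\mu}'=0$, hence $\quat{\mu}=0$ (as $h\in\{-1,1\}$), and then $\norm{\quat q}=1$ forces $\eta=\pm1$, (\ref{eq:orthonormal_spaces}) forces $\eta'=0$, and $\eta\quat{\mu}'=0$ forces $\quat{\mu}'=0$, so $\dq q=\pm1$. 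In the extended state the equilibrium set is thus $\mathcal{A}\triangleq\{(1,1),(-1,-1)\}$: the pairs $(1,-1)$ and $(-1,1)$ lie in $D$ and are mapped by $G$ into $\mathcal{A}$, and there are no jump equilibria since $h\in\overline{\sgn}(\eta)$ together with $(\dq q,h)\in D$ is impossible. Projecting $\mathcal{A}$ under $(\dq q,h)\mapsto\dq q$ gives $\{\pm1\}$, which is the first claim.

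For stability, I would use (\ref{eq:generalized_spatial_velocity}) to split the closed-loop twist into $\quat{\omega}=-k_{1}h\quat{\mu}$ and $\dot{\quat p}+\quat{\omega}\times\quat p=-k_{2}\eta\quat{\mu}'$, and the identity $\quat{\mu}'=\imag{\tfrac{1}{2}\quat r\quat p}=\tfrac{1}{2}(\eta\quat p+\quat{\mu}\times\quat p)$, to obtain $\dot{\quat p}=-\tfrac{1}{2}k_{2}\eta^{2}\quat p+\bigl(k_{1}h-\tfrac{1}{2}k_{2}\eta\bigr)\quat{\mu}\times\quat p$. Then I would take, for any $\lambda_{1},\lambda_{2}>0$, the function $V(\dq q,h)\triangleq\lambda_{1}(1-h\eta)+\lambda_{2}\norm{\quat p}^{2}$, which is a polynomial function of the state since $\quat p=2\quat r^{*}\quat q'$ gives $\norm{\quat p}^{2}=4(\eta'^{2}+\norm{\quat{\mu}'}^{2})$, is nonnegative on $C\cup D$ with $V=0$ exactly on $\mathcal{A}$, and has sublevel sets whose intersection with $C\cup D$ is compact (because $\quat q\in\unitquatset$ is bounded, $1-h\eta\le2$, a bound on $V$ bounds $\norm{\quat p}$, and $C$, $D$ are closed by Lemma~\ref{lem:basic-assumptions}). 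The computations $\dot{\eta}=\real{\tfrac{1}{2}\quat r\quat{\omega}}=\tfrac{1}{2}k_{1}h\norm{\quat{\mu}}^{2}$ and $\tfrac{d}{dt}\norm{\quat p}^{2}=2\quat p\cdot\dot{\quat p}=-k_{2}\eta^{2}\norm{\quat p}^{2}$ — the cross term vanishing because $\quat p\cdot(\quat{\mu}\times\quat p)=0$ — give on $C$ that $\dot{V}=-\tfrac{1}{2}\lambda_{1}k_{1}\norm{\quat{\mu}}^{2}-\lambda_{2}k_{2}\eta^{2}\norm{\quat p}^{2}\le0$, while across a jump (where $h\eta\le-\delta$, so $\eta\neq0$ and $h^{+}=\sgn\eta=-h$) one has $V^{+}-V=-2\lambda_{1}|\eta|\le-2\lambda_{1}\delta<0$.

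Finally I would invoke the hybrid invariance principle: by Lemma~\ref{lem:basic-assumptions} the closed-loop system satisfies the hybrid basic conditions of \cite{GST:09}, and every maximal solution is complete and precompact (the flow preserves $\unitdualquatgroup$ since $\real{\dq{\omega}}=0$, there is no finite escape because $V$ is nonincreasing with compact sublevel sets, and after any jump the state lies in $C\setminus D$ so consecutive jumps are impossible). Hence every solution converges to the largest weakly invariant subset of $\{(\dq q,h)\in C\cup D:\dot{V}=0\}$; since each jump strictly decreases $V$, this set admits no jumps, so it is a continuous-time invariant subset of $\{(\dq q,h)\in C:\norm{\quat{\mu}}=0,\ \eta^{2}\norm{\quat p}^{2}=0\}$, on which $\quat{\mu}\equiv0$ forces $\eta\equiv\pm1$, hence $\eta^{2}=1$, $\norm{\quat p}\equiv0$, $\dq q\equiv\pm1$, and $(\dq q,h)\in C$ with $\eta=\pm1$ forces $h=\eta$; so the set reduces to $\mathcal{A}$. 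Since $V$ is a Lyapunov function for $\mathcal{A}$ (positive definite on $C\cup D$, nonincreasing along flows, strictly decreasing at jumps), $\mathcal{A}$ is asymptotically stable — globally, in fact, as the sublevel sets of $V$ exhaust $C\cup D$ — and passing to the image under $(\dq q,h)\mapsto\dq q$ gives the second claim. The step I expect to be the main obstacle is the translational dissipation computation: recognizing that the dual feedback $-k_{2}\eta\quat{\mu}'$ together with the kinematic drift $-\quat{\omega}\times\quat p$ collapses, modulo a term orthogonal to $\quat p$, to the pure contraction $-\tfrac{1}{2}k_{2}\eta^{2}\quat p$, which hinges on the explicit $\quat{\mu}'$–$\quat p$ relation and the scalar–triple–product cancellation; a related subtlety is that the decrease rate is degenerate where $\eta=0$, so the invariance argument must first use $\quat{\mu}=0$ to rule out $\eta=0$ before it can conclude $\quat p=0$ — precisely where the $\unitdualquatset$ constraints enter and a naive transference from the quaternion proof of \cite{MST:11} would not suffice.
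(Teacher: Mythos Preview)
Your proposal is correct and is essentially the paper's own proof: your Lyapunov function $V=\lambda_{1}(1-h\eta)+\lambda_{2}\norm{\quat p}^{2}$ coincides (up to the identity $\norm{\quat p}^{2}=4(\eta'^{2}+\norm{\quat{\mu}'}^{2})$ you yourself note, and the choice $\lambda_{1}=2$, $\lambda_{2}=1/4$) with the paper's $V=2(1-h\eta)+\eta'^{2}+\norm{\quat{\mu}'}^{2}$, and your flow/jump decrease computations reproduce exactly the paper's $\dot V=-k_{1}\norm{\quat{\mu}}^{2}-k_{2}\eta^{2}(\eta'^{2}+\norm{\quat{\mu}'}^{2})$ and $V^{+}-V=4h\eta<0$. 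The only cosmetic differences are that you compute the translational dissipation via the body-frame dynamics $\dot{\quat p}$ and the triple-product cancellation rather than in the $(\eta',\quat{\mu}')$ coordinates, and you spell out the invariance/LaSalle step where the paper simply observes that $\dot V=0$ already forces $\dq q=\pm1$ pointwise on $\unitdualquatset$ and invokes Theorem~20 of \cite{GST:09}.
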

\begin{proof}
Using the control law (\ref{eq:hybrid control law - twist})-(\ref{eq:hysteresis_parameter_def})
in (\ref{eq:DQ_kinematics}), the closed-loop system is
\begin{gather}
\begin{split}\dot{\dq q} & =\dot{\eta}+\dot{\quat{\mu}}+\dual\left(\dot{\eta}'+\dot{\quat{\mu}}'\right),\:\text{ with }\dot{\eta}=\frac{1}{2}k_{1}h\norm{\quat{\mu}}^{2},\\
\dot{\quat{\mu}} & =-\frac{1}{2}\eta k_{1}h\quat{\mu},\hspace{1cm}\dot{\eta}'=\frac{1}{2}\left(k_{1}h+k_{2}\eta\right)\quat{\mu}'\cdot\quat{\mu},\\
\dot{\quat{\mu}}' & =\frac{1}{2}\left[\left(k_{1}h-k_{2}\eta\right)\quat{\mu}\times\quat{\mu}'-k_{1}h\eta'\quat{\mu}-k_{2}\eta^{2}\quat{\mu}'\right].
\end{split}
\label{eq:closed_loop_with_hybrid_control_law}
\end{gather}

To find the equilibria of (\ref{eq:closed_loop_with_hybrid_control_law}),
note that $\dot{\dualquaternion q}=0$ implies $\quat{\mu}=0.$ From
the unit sphere constraint~(\ref{eq:DQ_Group_Description}), it also
follows that $\eta=\pm1$ whereby we can find that $\quat{\mu}'=0$.
In this context, the constraint (\ref{eq:orthonormal_spaces}) also
renders $\eta'=0$. Hence, the set of equilibrium points of (\ref{eq:closed_loop_with_hybrid_control_law})
is the set $\left\{ \pm1\right\} $.

To study the stability of the set of equilibrium points $\left\{ \pm1\right\} $,
let us regard the Lyapunov candidate function
\begin{equation}
V(\dq q,h)=2\left(1-h\eta\right)+\eta'^{2}+\norm{\quat{\mu}'}^{2}.\label{eq:Lyapunov_Definition}
\end{equation}

Since $\eta\in\left[-1,1\right]$ and $h\in\left\{ -1,1\right\} $,
one has that $\left(1-h\eta\right)\ge0.$ Therefore, $V$ is a positive
semidefinite function. The condition $V=0$ implies $0\leq2\left(1-h\eta\right)=-\eta'^{2}-\norm{\quat{\mu}'}^{2}\le0$
which yields $\eta'=0$, $\quat{\mu}'=0$ and $h\eta=1$, that is,
$\dualquaternion q=\pm1$. Hence, $V$ is a positive definite function.

Taking the time-derivative of $V$ and using (\ref{eq:closed_loop_with_hybrid_control_law})
yields
\begin{align*}
\dot{V} & =-2h\dot{\eta}+2\eta'\dot{\eta}'+2\quat{\mu}'\cdot\dot{\quat{\mu}}'\\
 & =-h^{2}k_{1}\norm{\quat{\mu}}^{2}-\eta^{2}\eta'^{2}k_{2}-\eta^{2}\norm{\quat{\mu}'}^{2}k_{2}\le0.
\end{align*}
In addition, $\dot{V}=0$ if and only if $\dualquaternion q\in\{\pm1\}$.
Moreover, $V$ also decreases over jumps of the closed loop system
since for $h\eta<-\delta<0$ one has that
\[
V(\dq q,h^{+})-V(\dq q,h)=4h\eta<0.
\]

Thus, asymptotically stability of the set $\left\{ \pm1\right\} $
follows from Lemma~\ref{lem:basic-assumptions} and by Theorem~20
of \cite{GST:09}. It is also important to highlight that the closed-loop
differential equation is well-posed \cite[Prop. 2.1]{Sach:09} as
$\dq{\omega}$ is in the Lie algebra of $\unitdualquatgroup$.
\end{proof}
\begin{rem}
At a first glance, one could imagine that due to the transference
principle \cite{Chev:96}, the extension of rotation stabilizers (e.g.,
the ones of \cite{BMS:95,MST:11}) to full rigid body stabilizers
would be trivial, only requiring the substitution of adequate variables
as in (\ref{eq:Q_kinematics}) and (\ref{eq:DQ_kinematics}). However,
for stability analysis based on Lyapunov functions, this supposition
doesn't even make sense, since a Lyapunov function is a real-valued
function and never a dual-number valued function. As a consequence,
stabilization in $\unitdualquatgroup$ using dual quaternions required
one independent study from the quaternion stabilization analysis in
$\unitquatgroup$. The necessity of different procedures for quaternion
and dual quaternion is also inferred by remembering that due to the
fact that $\SO 3$ is compact and $\SE 3$ is not, it was required
one controller design procedure for each case in \cite{BM:95}. 
\end{rem}
Similarly to the rotation controllers proposed in \cite{MST:11},
the proposed pose controller doesn't exhibit Zeno behavior \cite{GST:09}.
This is shown in the next lemma. 
\begin{lem}
For any compact set $K\subset\unitdualquatset\times\left\{ -1,1\right\} $,
if $x$ is a solution of (\ref{eq:DQ_kinematics}), (\ref{eq:hybrid control law - twist})
and (\ref{eq:hysteresis_parameter_def}) with initial state in $K$,
then the number of jumps is bounded. 
\end{lem}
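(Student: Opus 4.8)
The plan is to show that jumps cannot accumulate by establishing a uniform positive lower bound on the time between consecutive jumps, where this bound depends only on the compact set $K$. The key observation is the hysteresis gap built into $\delta$: immediately after a jump at some time $t_j$, we are in the flow set with $h^{+}\in\overline{\sgn}(\eta)$, which forces $h^{+}\eta \ge 0 > -\delta$. For the next jump to occur we need $h\eta$ to decrease from something nonnegative all the way down to $-\delta$. The strategy is to bound $|\dot\eta|$ (equivalently, the rate of change of $h\eta$ along flows, since $\dot h = 0$ during flow) uniformly over $K$, and then conclude that decreasing $h\eta$ by at least $\delta$ requires at least time $\delta/M$ for some constant $M = M(K)$.

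First I would make precise the dynamics of $\eta$ along a flow. From the closed-loop system~(\ref{eq:closed_loop_with_hybrid_control_law}) we have $\dot\eta = \tfrac{1}{2}k_1 h \norm{\quat\mu}^2$, so along flows $\frac{d}{dt}(h\eta) = h\dot\eta = \tfrac{1}{2}k_1 h^2 \norm{\quat\mu}^2 = \tfrac{1}{2}k_1\norm{\quat\mu}^2 \ge 0$. Interestingly, this shows $h\eta$ is nondecreasing along flows, so the quantity $h\eta$ can only decrease across jumps. Hence after a jump $h\eta \ge 0$, and since flows do not decrease $h\eta$, a subsequent jump (requiring $h\eta \le -\delta$) is in fact impossible after the first one — unless one is careful about what happens at the jump itself. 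I would reexamine this: at a jump, $\eta$ is unchanged and $h$ is updated to lie in $\overline{\sgn}(\eta)$, so $h^+\eta = |\eta|\,\sgn(\eta)\cdot\ldots \ge 0$; combined with $h\eta$ nondecreasing along flows, once we leave $D$ we never return. So the number of jumps is at most one after the initial time, plus possibly one at the initial time if the initial condition already lies in $D$ — in any case at most two. This gives the bound trivially, with no Zeno behavior and in fact no need for a quantitative dwell-time estimate.

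The main subtlety — and where I would be careful — is the edge case $\eta = 0$ at a jump, where $\overline{\sgn}(0) = \{-1,1\}$ and the jump map is genuinely set-valued; one must check that every selection still yields $h^+\eta = 0 \ge -\delta$, which it does since $\eta = 0$. Another point to verify is that solutions starting in $K$ remain in the compact set $\unitdualquatset \times \{-1,1\}$ (which follows because $\unitdualquatgroup$ is invariant under the flow, $\dq\omega$ lying in its Lie algebra, and $h$ stays in $\{-1,1\}$), so that all the relevant quantities are well-defined and the argument is self-contained. I would also remark that this monotonicity of $h\eta$ is precisely the mechanism by which the hysteresis prevents chattering, tying the lemma back to the robustness discussion. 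If one prefers the dwell-time style of argument paralleling~\cite{MST:11}, the fallback is to bound $\norm{\quat\mu}^2 \le 2(1 - |\eta|)$ times a constant together with continuity of $\dot\eta$ on the compact invariant set to get $|\dot\eta| \le M(K)$, yielding inter-jump time $\ge \delta/M(K) > 0$; but the monotonicity argument above is cleaner and I would present that as the primary proof.
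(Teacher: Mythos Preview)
Your argument is correct and is essentially the same approach the paper intends: the paper's own proof consists only of the reference ``Similar to Theorem~5.3 of \cite{MST:11}'', and the monotonicity observation $\frac{d}{dt}(h\eta)=\tfrac{1}{2}k_{1}\norm{\quat{\mu}}^{2}\ge 0$ along flows, together with $h^{+}\eta\ge 0$ after any jump, is precisely the content of that cited argument in the kinematic case. Your conclusion that at most one jump occurs is sharper than merely ``bounded'' and is correct.

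One minor slip worth fixing: you refer to ``the compact set $\unitdualquatset\times\{-1,1\}$'', but $\unitdualquatset\cong\unitquatset\times\mathbb{R}^{3}$ is \emph{not} compact (a point the paper itself stresses). Fortunately your monotonicity argument does not use compactness or any boundedness of trajectories at all---only the invariance of $\unitdualquatset$ under the flow and of $\{-1,1\}$ under jumps---so the error is cosmetic. Just replace ``compact'' by ``closed invariant'' there.
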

\begin{proof}
Similar to Theorem~5.3 of \cite{MST:11}. 
\end{proof}
The stability robustness will be characterized by the system's resistance
against $\alpha$-perturbations: given $\alpha>0$, the $\alpha$-perturbation
of the hybrid system given by $\vec{F},\vec{G}$ as in (\ref{eq:hybrid maps vec}),
and $\vec{C},\vec{D}$ as in (\ref{eq:hybrid sets vec}), is given
by 
\begin{gather*}
\vec{C}_{\alpha}\triangleq\left\{ x\in\mathbb{R}^{9}:\left(x+\alpha\closedballset\right)\cap\vec{C}\ne\emptyset\right\} ,\\
\vec{F}_{\alpha}\left(x\right)\triangleq\overline{\mathrm{co}}\,\vec{F}\left(\left(x+\alpha\closedballset\right)\cap\vec{C}\right)+\alpha\closedballset,\ \text{for all }x\in\vec{C}_{\alpha},\\
\vec{D}_{\alpha}\triangleq\left\{ x\in\mathbb{R}^{9}:\left(x+\alpha\closedballset\right)\cap\vec{D}\ne\emptyset\right\} ,\\
\vec{G}_{\alpha}\left(x\right)\triangleq\left\{ v\in\mathbb{R}^{9}:v\in g+\alpha\closedballset,g\in\vec{G}\left((x+\alpha\closedballset)\cap\vec{D}\right)\right\} ,\ \text{for all }x\in\vec{D}_{\alpha},
\end{gather*}
where $\overline{\mathrm{co}}\,X$ denotes the closure of the convex
hull of the set $X$. These perturbations, as illustrated in \cite{GST:09},
include both measurement and modeling error. 

The lack of sensitivity to these perturbations will be expressed in
Theorem~\ref{thm:robustness} by bounding the Lyapunov function by
a class-$\mathcal{KL}$ function. This bound guarantees practical
stability for perturbed solutions starting from arbitrarily large
subsets of the basin of attraction of $\{\pm1\}$ \cite{GST:09}.
\begin{thm}
\label{thm:robustness} Let $V$ be as in (\ref{eq:Lyapunov_Definition}).
Then there exists a class-$\mathcal{KL}$ function $\beta$ such that
for each compact set $K\subset{\unitdualquatset\times\left\{ -1,1\right\} }$
and $\Delta>0$ there exists $\alpha^{*}>0$ such that for each $\alpha\in\left(0,\alpha^{*}\right]$,
the solutions $x_{\alpha}$ from $K$ of the perturbed system $\mathcal{H}_{\alpha}=(\vec{C}_{\alpha},\vec{F}_{\alpha},\vec{D}_{\alpha},\vec{G}_{\alpha})$
satisfy
\[
V\left(x_{\alpha}\left(t,j\right)\right)\le\beta\left(V\left(x_{\alpha}\left(0,0\right)\right),t+j\right)+\Delta,\ \forall\left(t,j\right)\in\mathrm{dom}\,x_{\alpha}
\]
\end{thm}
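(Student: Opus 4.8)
The plan is to invoke the robustness machinery of the hybrid systems framework in \cite{GST:09}, which is designed precisely for this purpose: once one has a hybrid system that is \emph{nominally well-posed}\textemdash that is, its data $(\vec{C},\vec{F},\vec{D},\vec{G})$ satisfies the basic hybrid conditions\textemdash and one has a compact attractor that is asymptotically stable with a known basin of attraction, then semiglobal practical robustness against $\alpha$-perturbations follows automatically. Concretely, the argument proceeds in three steps.

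\emph{Step 1: Verify the hybrid basic conditions.} By Lemma~\ref{lem:basic-assumptions}, the sets $\vec{C}$ and $\vec{D}$ are closed in $\mathbb{R}^{9}$, the flow map $\vec{F}$ is continuous (hence in particular outer semicontinuous, locally bounded, with nonempty convex values on $\vec{C}$), and the jump map $\vec{G}$ is outer semicontinuous, locally bounded, with nonempty values on $\vec{D}$. Thus the hybrid system $\mathcal{H}=(\vec{C},\vec{F},\vec{D},\vec{G})$ satisfies the hybrid basic conditions of \cite{GST:09} and is therefore nominally well-posed.

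\emph{Step 2: Establish uniform asymptotic stability with a $\mathcal{KL}$ estimate along the Lyapunov sublevel sets.} From the previous theorem, the compact set $\mathcal{A}=\{\pm 1\}\times\{-1,1\}$ (in $\vec{C}\cup\vec{D}$) is asymptotically stable, and the Lyapunov function $V$ of (\ref{eq:Lyapunov_Definition}) strictly decreases both along flows (by $\dot V\le 0$ with $\dot V=0$ only on $\mathcal{A}$) and across jumps (by the computation $V(\dq q,h^{+})-V(\dq q,h)=4h\eta<0$ on $\vec{D}$). Invoking the converse-Lyapunov/$\mathcal{KL}$-characterization for well-posed hybrid systems \cite{GST:09}, asymptotic stability of $\mathcal{A}$ yields a class-$\mathcal{KL}$ function $\beta$ such that every nominal solution $x$ satisfies $V(x(t,j))\le\beta(V(x(0,0)),t+j)$ for all $(t,j)\in\mathrm{dom}\,x$; here one also uses that $V$ is positive definite with respect to $\mathcal{A}$ and radially unbounded on the (bounded-in-$\eta,\quat\mu$, unbounded-in-$\eta',\quat\mu'$) manifold so that sublevel sets of $V$ are compact and the basin of attraction is the whole state space.

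\emph{Step 3: Transfer to the perturbed system via semiglobal practical robustness.} Fix a compact set $K\subset\unitdualquatset\times\{-1,1\}$ and $\Delta>0$. Because $\mathcal{H}$ is nominally well-posed and $\mathcal{A}$ is (pre-)asymptotically stable with $K$ contained in its basin, the semiglobal practical robustness result for hybrid systems (\cite[Theorem~17 / Lemma~7.20]{GST:09}; see also \cite{GST:12}) guarantees the existence of $\alpha^{*}>0$ such that for every $\alpha\in(0,\alpha^{*}]$, each solution $x_{\alpha}$ of the perturbed system $\mathcal{H}_{\alpha}=(\vec{C}_{\alpha},\vec{F}_{\alpha},\vec{D}_{\alpha},\vec{G}_{\alpha})$ with $x_{\alpha}(0,0)\in K$ satisfies $x_{\alpha}(t,j)\in\mathcal{A}+\Delta'\closedballset$ after a $\mathcal{KL}$-governed transient; expressing this in terms of $V$ (Lipschitz on the relevant compact set, so an $\alpha$-ball inflation of the state translates into an additive constant in $V$) gives $V(x_{\alpha}(t,j))\le\beta(V(x_{\alpha}(0,0)),t+j)+\Delta$ for all $(t,j)\in\mathrm{dom}\,x_{\alpha}$, possibly after shrinking $\alpha^{*}$ and enlarging $\beta$ slightly to absorb the bounded transient overshoot. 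This is the claimed bound.

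The main obstacle is \emph{Step 3}: one must make precise the passage from the abstract $\mathcal{KL}$-estimate for the attractor $\mathcal{A}$ to the stated bound on the Lyapunov function $V$ of the \emph{perturbed} trajectories, uniformly in the starting compact set $K$. The subtlety is that the $\alpha$-perturbed data $(\vec{C}_{\alpha},\vec{F}_{\alpha},\vec{D}_{\alpha},\vec{G}_{\alpha})$ need no longer leave the manifold $\unitdualquatset\times\{-1,1\}$ invariant\textemdash perturbed solutions drift off the unit-norm constraint\textemdash so $V$ is being evaluated off the set where our Lyapunov computations were performed; one must rely on continuity/local Lipschitzness of $V$ near $\mathcal{A}$ and on the fact that the $\mathcal{KL}$-stability of $\mathcal{A}$ for $\mathcal{H}$ is \emph{robust} (a consequence of nominal well-posedness) to conclude that the perturbed solutions remain in a neighborhood where $V$ is controlled, and then choose $\Delta$-dependent $\alpha^{*}$ accordingly. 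All remaining steps are direct citations of \cite{GST:09} combined with the facts already established in Lemma~\ref{lem:basic-assumptions} and the preceding theorem.
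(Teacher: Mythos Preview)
Your overall strategy\textemdash verify the hybrid basic conditions via Lemma~\ref{lem:basic-assumptions}, obtain a nominal $\mathcal{KL}$ estimate from asymptotic stability, then invoke the semiglobal practical robustness theorems of \cite{GST:09}\textemdash is exactly the paper's approach. Two points, however, deserve correction.

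First, your identification of the attractor is off: you take $\mathcal{A}=\{\pm1\}\times\{-1,1\}$ and then assert that $V$ is positive definite with respect to $\mathcal{A}$, but $V(1,-1)=V(-1,1)=4\neq0$. The set on which $V$ vanishes is $\{(1,1),(-1,-1)\}$, and this is the compact set the paper works with.

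Second, your Step~3 introduces unnecessary complications (translating an attractor-distance bound into a $V$-bound via Lipschitzness, worrying about perturbed solutions leaving the manifold). The paper bypasses all of this by making one clean observation: $V$ is a \emph{proper indicator function} of $\{(1,1),(-1,-1)\}$ on $\unitdualquatset\times\{-1,1\}$\textemdash it is continuous, positive definite with respect to that set, and tends to infinity as its argument escapes any compact subset (here the non-compactness of $\unitdualquatset$ in the $\eta',\quat{\mu}'$ directions is what makes this work; see the Remark following the theorem). Once $V$ is a proper indicator, Theorem~14 of \cite{GST:09} gives the nominal bound $V(x(t,j))\le\beta(V(x(0,0)),t+j)$ directly, and Theorem~17 of \cite{GST:09}, whose hypotheses are precisely the hybrid basic conditions plus a proper-indicator $\mathcal{KL}$ bound, delivers the perturbed estimate in the stated form without any further conversion. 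So the ``main obstacle'' you flag in Step~3 dissolves once the proper-indicator structure is recognized.
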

\begin{proof}
We have that $V$ is a proper indicator function\footnote{Following \cite[p. 145]{GST:12}, a proper indicator function of a
compact set $\mathcal{A}$ in an open set $\mathcal{O}\supseteq\mathcal{A}$
is a continuous function on $\mathcal{O}$ which is positive definite
with respect to $\mathcal{A}$ and such that it tends to infinity
as its argument tends to infinity or to the boundary of $\mathcal{O}$.} of the compact set $\left\{ (1,1),(-1,-1)\right\} $ in ${\unitdualquatset\times\left\{ -1,1\right\} }$.
From \cite[Theorem 14]{GST:09}, there exists a class-$\mathcal{KL}$
function $\beta$ such that for all solutions $x$ of ${\unitdualquatset\times\left\{ -1,1\right\} }$,
\[
V\left(x\left(t,j\right)\right)\le\beta\left(V\left(x\left(0,0\right)\right),t+j\right),\ \forall\left(t,j\right)\in\mathrm{dom}\,x.
\]
From this and from Lemma~\ref{lem:basic-assumptions}, the $\mathcal{KL}$
bound on $V\left(x_{\alpha}\left(t,j\right)\right)$ follows now by
\cite[Theorem 17]{GST:09}. 
\end{proof}
\begin{rem}
Differently from the Lyapunov function proposed in \cite{MST:11}
for its hybrid kinematic controller, the proposed Lyapunov function
(\ref{eq:Lyapunov_Definition}) exploits the non-compactness of $\unitdualquatset$
to be a proper indicator function, enabling the direct proof of Theorem~\ref{thm:robustness}.
\end{rem}

\section{Numerical simulations\label{sec:Numerical-simulations}}

In this section, the effectiveness of the proposed hybrid technique
for robust global stabilization of the rigid body motion is demonstrated
in four different numerical simulations.\footnote{The results of the simulations were computed using MATLAB environment
and the DQ\_robotics toolbox (\url{http://dqrobotics.sourceforge.net/}).} The first simulation considers the robustness of the proposed controller
against chattering. The second simulation shows the influence of the
design parameter $\delta$ in the execution of the controller. The
last two simulations consider a more practical situation using a robotic
manipulator.

We first illustrate the proposed controller global stability and robustness
against measurement noises. To this aim, a simulation is performed
using the hybrid feedback controller (\ref{eq:hybrid control law - twist}),
with hysteresis parameter $\delta=0.3$, and the pure discontinuous
controller (\ref{eq:Han_controller})\textemdash using the same proportional
gain $k=0.08$. For this particular scenario, we assume an initial
condition, $\dq q_{0}=0.001+\imi0.72+\imj0.06+\imk0.69+\dual\left({-}55.15{-}\imi2.52{+}\imj36.71{-}\imk0.59\right)$,
which was chosen arbitrarily, located in the neighborhood of $\eta=0$,
and a measurement noise over $\eta$ set to $\mathcal{N}(0,0.16)$,
that is, a Gaussian random variable with zero mean and $0.16$ variance.
Fig.$\,$\ref{fig:Pure Discontinuous Controller - Chattering - raster version}
illustrates the result from the discontinuous controller (\ref{eq:Han_controller})
whereby one can clearly see the problematic noise influence\textemdash for
instance, the excess of switches causing chattering for up to $20$
seconds and the consequent convergence lag. In contrast, the proposed
hybrid feedback controller ensures a robust performance without chattering
as shown in Fig.$\,$\ref{fig:Convergence }. \newcommand{\optionfig}{1}

To further highlight the absence of chattering and performance improvements
from the hybrid feedback solution (\ref{eq:hybrid control law - twist})\textemdash regardless
the initial and noise conditions and the control parameters\textemdash a
second scenario is devised with initial condition $\dq q_{0}=0.001+\imi0.78+\imj0.57+\imk0.28+\dual\left({-}1.28{+}\imi1.50{-}\imj2.44{+}\imk0.77\right)$
and a zero mean Gaussian measurement noise over $\eta$ with a $0.1$
standard deviation, which was also chosen arbitrarily. The results
illustrating the trajectory of $\eta$ from both the discontinuous
and hybrid controllers\textemdash set with the same control gain,
$k=2$\textemdash are shown in Fig.$\,$\ref{fig:Hybrid vs Discontinuous }.

\newcommand{%
\scriptsize%
\def\svgwidth{0.445\columnwidth}%
\begingroup%
  \makeatletter%
  \providecommand\color[2][]{%
    \errmessage{(Inkscape) Color is used for the text in Inkscape, but the package 'color.sty' is not loaded}%
    \renewcommand\color[2][]{}%
  }%
  \providecommand\transparent[1]{%
    \errmessage{(Inkscape) Transparency is used (non-zero) for the text in Inkscape, but the package 'transparent.sty' is not loaded}%
    \renewcommand\transparent[1]{}%
  }%
  \providecommand\rotatebox[2]{#2}%
  \ifx\svgwidth\undefined%
    \setlength{\unitlength}{423.798bp}%
    \ifx\svgscale\undefined%
      \relax%
    \else%
      \setlength{\unitlength}{\unitlength * \real{\svgscale}}%
    \fi%
  \else%
    \setlength{\unitlength}{\svgwidth}%
  \fi%
  \global\let\svgwidth\undefined%
  \global\let\svgscale\undefined%
  \makeatother%
  \begin{picture}(1,0.76311311)%
    \put(0,0){\includegraphics[width=\unitlength]{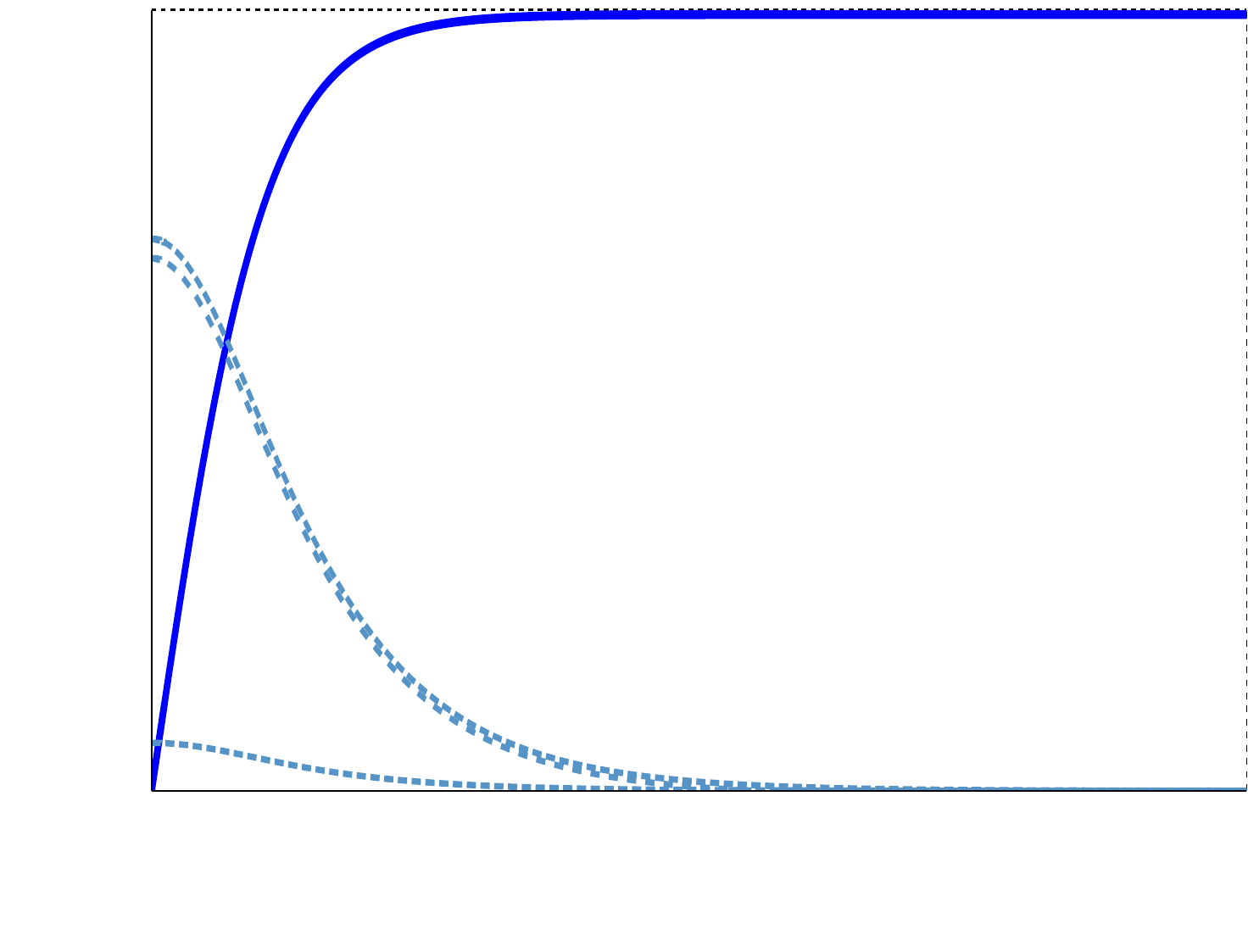}}%
    \put(0.12152485,0.07064925){\color[rgb]{0,0,0}\makebox(0,0)[b]{\smash{$0$}}}%
    \put(0.26782096,0.07064925){\color[rgb]{0,0,0}\makebox(0,0)[b]{\smash{$10$}}}%
    \put(0.41411708,0.07064925){\color[rgb]{0,0,0}\makebox(0,0)[b]{\smash{$20$}}}%
    \put(0.56041319,0.07064925){\color[rgb]{0,0,0}\makebox(0,0)[b]{\smash{$30$}}}%
    \put(0.7067093,0.07064925){\color[rgb]{0,0,0}\makebox(0,0)[b]{\smash{$40$}}}%
    \put(0.85300541,0.07064925){\color[rgb]{0,0,0}\makebox(0,0)[b]{\smash{$50$}}}%
    \put(0.99930153,0.07064925){\color[rgb]{0,0,0}\makebox(0,0)[b]{\smash{$60$}}}%
    \put(0.10396932,0.11734337){\color[rgb]{0,0,0}\makebox(0,0)[rb]{\smash{$0$}}}%
    \put(0.10396932,0.24181459){\color[rgb]{0,0,0}\makebox(0,0)[rb]{\smash{$.2$}}}%
    \put(0.10396932,0.36628319){\color[rgb]{0,0,0}\makebox(0,0)[rb]{\smash{$.4$}}}%
    \put(0.10396932,0.49075441){\color[rgb]{0,0,0}\makebox(0,0)[rb]{\smash{$.6$}}}%
    \put(0.10396932,0.61522301){\color[rgb]{0,0,0}\makebox(0,0)[rb]{\smash{$.8$}}}%
    \put(0.10396932,0.73969424){\color[rgb]{0,0,0}\makebox(0,0)[rb]{\smash{$1$}}}%
    \put(0.02581566,0.35218944){\color[rgb]{0,0,0}\rotatebox{90}{\makebox(0,0)[lb]{\smash{Rotation $\quat r$}}}}%
    \put(0.55841095,0.00512591){\color[rgb]{0,0,0}\makebox(0,0)[b]{\smash{time ($s$)}}}%
    \put(0.17069324,0.17873941){\color[rgb]{0,0,0}\makebox(0,0)[b]{\smash{$\mu_{2}$}}}%
    \put(0.1517078,0.58243042){\color[rgb]{0,0,0}\makebox(0,0)[b]{\smash{$\mu_{1}$}}}%
    \put(0.14842029,0.4973355){\color[rgb]{0,0,0}\makebox(0,0)[b]{\smash{$\mu_{3}$}}}%
    \put(0.3712943,0.71259621){\color[rgb]{0,0,0}\makebox(0,0)[b]{\smash{$\eta$}}}%
  \end{picture}%
\endgroup%
}{%
\scriptsize%
\def\svgwidth{0.445\columnwidth}%
\begingroup%
  \makeatletter%
  \providecommand\color[2][]{%
    \errmessage{(Inkscape) Color is used for the text in Inkscape, but the package 'color.sty' is not loaded}%
    \renewcommand\color[2][]{}%
  }%
  \providecommand\transparent[1]{%
    \errmessage{(Inkscape) Transparency is used (non-zero) for the text in Inkscape, but the package 'transparent.sty' is not loaded}%
    \renewcommand\transparent[1]{}%
  }%
  \providecommand\rotatebox[2]{#2}%
  \ifx\svgwidth\undefined%
    \setlength{\unitlength}{423.798bp}%
    \ifx\svgscale\undefined%
      \relax%
    \else%
      \setlength{\unitlength}{\unitlength * \real{\svgscale}}%
    \fi%
  \else%
    \setlength{\unitlength}{\svgwidth}%
  \fi%
  \global\let\svgwidth\undefined%
  \global\let\svgscale\undefined%
  \makeatother%
  \begin{picture}(1,0.76311311)%
    \put(0,0){\includegraphics[width=\unitlength]{figEx1_DiscVSHybrid_orientation___hybridOnly.pdf}}%
    \put(0.12152485,0.07064925){\color[rgb]{0,0,0}\makebox(0,0)[b]{\smash{$0$}}}%
    \put(0.26782096,0.07064925){\color[rgb]{0,0,0}\makebox(0,0)[b]{\smash{$10$}}}%
    \put(0.41411708,0.07064925){\color[rgb]{0,0,0}\makebox(0,0)[b]{\smash{$20$}}}%
    \put(0.56041319,0.07064925){\color[rgb]{0,0,0}\makebox(0,0)[b]{\smash{$30$}}}%
    \put(0.7067093,0.07064925){\color[rgb]{0,0,0}\makebox(0,0)[b]{\smash{$40$}}}%
    \put(0.85300541,0.07064925){\color[rgb]{0,0,0}\makebox(0,0)[b]{\smash{$50$}}}%
    \put(0.99930153,0.07064925){\color[rgb]{0,0,0}\makebox(0,0)[b]{\smash{$60$}}}%
    \put(0.10396932,0.11734337){\color[rgb]{0,0,0}\makebox(0,0)[rb]{\smash{$0$}}}%
    \put(0.10396932,0.24181459){\color[rgb]{0,0,0}\makebox(0,0)[rb]{\smash{$.2$}}}%
    \put(0.10396932,0.36628319){\color[rgb]{0,0,0}\makebox(0,0)[rb]{\smash{$.4$}}}%
    \put(0.10396932,0.49075441){\color[rgb]{0,0,0}\makebox(0,0)[rb]{\smash{$.6$}}}%
    \put(0.10396932,0.61522301){\color[rgb]{0,0,0}\makebox(0,0)[rb]{\smash{$.8$}}}%
    \put(0.10396932,0.73969424){\color[rgb]{0,0,0}\makebox(0,0)[rb]{\smash{$1$}}}%
    \put(0.02581566,0.35218944){\color[rgb]{0,0,0}\rotatebox{90}{\makebox(0,0)[lb]{\smash{Rotation $\quat r$}}}}%
    \put(0.55841095,0.00512591){\color[rgb]{0,0,0}\makebox(0,0)[b]{\smash{time ($s$)}}}%
    \put(0.17069324,0.17873941){\color[rgb]{0,0,0}\makebox(0,0)[b]{\smash{$\mu_{2}$}}}%
    \put(0.1517078,0.58243042){\color[rgb]{0,0,0}\makebox(0,0)[b]{\smash{$\mu_{1}$}}}%
    \put(0.14842029,0.4973355){\color[rgb]{0,0,0}\makebox(0,0)[b]{\smash{$\mu_{3}$}}}%
    \put(0.3712943,0.71259621){\color[rgb]{0,0,0}\makebox(0,0)[b]{\smash{$\eta$}}}%
  \end{picture}%
\endgroup%
}%
\newcommand{
\scriptsize\def\svgwidth{0.48\columnwidth}%
\begingroup%
  \makeatletter%
  \providecommand\color[2][]{%
    \errmessage{(Inkscape) Color is used for the text in Inkscape, but the package 'color.sty' is not loaded}%
    \renewcommand\color[2][]{}%
  }%
  \providecommand\transparent[1]{%
    \errmessage{(Inkscape) Transparency is used (non-zero) for the text in Inkscape, but the package 'transparent.sty' is not loaded}%
    \renewcommand\transparent[1]{}%
  }%
  \providecommand\rotatebox[2]{#2}%
  \ifx\svgwidth\undefined%
    \setlength{\unitlength}{420bp}%
    \ifx\svgscale\undefined%
      \relax%
    \else%
      \setlength{\unitlength}{\unitlength * \real{\svgscale}}%
    \fi%
  \else%
    \setlength{\unitlength}{\svgwidth}%
  \fi%
  \global\let\svgwidth\undefined%
  \global\let\svgscale\undefined%
  \makeatother%
  \begin{picture}(1,0.73333333)%
    \put(0,0){\includegraphics[width=\unitlength]{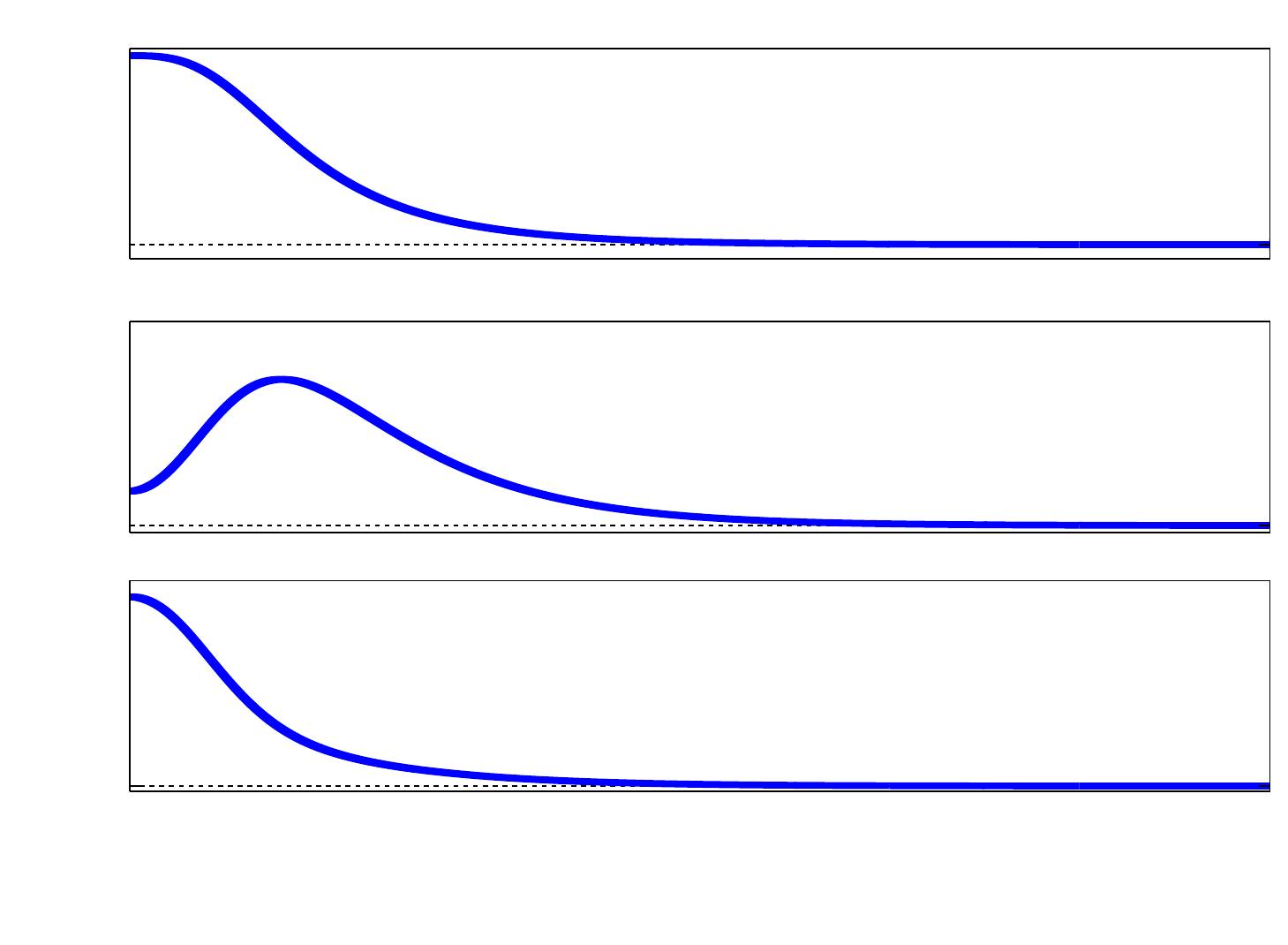}}%
    \put(0.09067393,0.53386138){\color[rgb]{0,0,0}\makebox(0,0)[rb]{\smash{$0$}}}%
    \put(0.09067393,0.59015592){\color[rgb]{0,0,0}\makebox(0,0)[rb]{\smash{$.5$}}}%
    \put(0.09067393,0.64645046){\color[rgb]{0,0,0}\makebox(0,0)[rb]{\smash{$1$}}}%
    \put(0.09067393,0.3157843){\color[rgb]{0,0,0}\makebox(0,0)[rb]{\smash{$0$}}}%
    \put(0.09067393,0.37178378){\color[rgb]{0,0,0}\makebox(0,0)[rb]{\smash{$20$}}}%
    \put(0.09067393,0.42778114){\color[rgb]{0,0,0}\makebox(0,0)[rb]{\smash{$40$}}}%
    \put(0.09067393,0.11352529){\color[rgb]{0,0,0}\makebox(0,0)[rb]{\smash{$0$}}}%
    \put(0.09067393,0.17803112){\color[rgb]{0,0,0}\makebox(0,0)[rb]{\smash{$10$}}}%
    \put(0.09067393,0.24253906){\color[rgb]{0,0,0}\makebox(0,0)[rb]{\smash{$20$}}}%
    \put(0.01688055,0.18752547){\color[rgb]{0,0,0}\rotatebox{90}{\makebox(0,0)[b]{\smash{$p_{3}$}}}}%
    \put(0.01833333,0.40215061){\color[rgb]{0,0,0}\rotatebox{90}{\makebox(0,0)[b]{\smash{$p_{2}$}}}}%
    \put(0.01833333,0.61477527){\color[rgb]{0,0,0}\rotatebox{90}{\makebox(0,0)[b]{\smash{$p_{1}$}}}}%
    \put(0.10174798,0.0678249){\color[rgb]{0,0,0}\makebox(0,0)[b]{\smash{$0$}}}%
    \put(0.24936702,0.0678249){\color[rgb]{0,0,0}\makebox(0,0)[b]{\smash{$10$}}}%
    \put(0.39698606,0.0678249){\color[rgb]{0,0,0}\makebox(0,0)[b]{\smash{$20$}}}%
    \put(0.54460511,0.0678249){\color[rgb]{0,0,0}\makebox(0,0)[b]{\smash{$30$}}}%
    \put(0.69222414,0.0678249){\color[rgb]{0,0,0}\makebox(0,0)[b]{\smash{$40$}}}%
    \put(0.83984319,0.0678249){\color[rgb]{0,0,0}\makebox(0,0)[b]{\smash{$50$}}}%
    \put(0.98746224,0.0678249){\color[rgb]{0,0,0}\makebox(0,0)[b]{\smash{$60$}}}%
    \put(0.54860111,0.00572086){\color[rgb]{0,0,0}\makebox(0,0)[b]{\smash{time ($s$)}}}%
    \put(0.18904892,0.70094132){\color[rgb]{0,0,0}\makebox(0,0)[rb]{\smash{\tiny $\times 10^2$}}}%
  \end{picture}%
\endgroup%
}{
\scriptsize\def\svgwidth{0.48\columnwidth}%
\begingroup%
  \makeatletter%
  \providecommand\color[2][]{%
    \errmessage{(Inkscape) Color is used for the text in Inkscape, but the package 'color.sty' is not loaded}%
    \renewcommand\color[2][]{}%
  }%
  \providecommand\transparent[1]{%
    \errmessage{(Inkscape) Transparency is used (non-zero) for the text in Inkscape, but the package 'transparent.sty' is not loaded}%
    \renewcommand\transparent[1]{}%
  }%
  \providecommand\rotatebox[2]{#2}%
  \ifx\svgwidth\undefined%
    \setlength{\unitlength}{420bp}%
    \ifx\svgscale\undefined%
      \relax%
    \else%
      \setlength{\unitlength}{\unitlength * \real{\svgscale}}%
    \fi%
  \else%
    \setlength{\unitlength}{\svgwidth}%
  \fi%
  \global\let\svgwidth\undefined%
  \global\let\svgscale\undefined%
  \makeatother%
  \begin{picture}(1,0.73333333)%
    \put(0,0){\includegraphics[width=\unitlength]{figEx1_DiscVSHybrid_translation_curva3x___hybridonly.pdf}}%
    \put(0.09067393,0.53386138){\color[rgb]{0,0,0}\makebox(0,0)[rb]{\smash{$0$}}}%
    \put(0.09067393,0.59015592){\color[rgb]{0,0,0}\makebox(0,0)[rb]{\smash{$.5$}}}%
    \put(0.09067393,0.64645046){\color[rgb]{0,0,0}\makebox(0,0)[rb]{\smash{$1$}}}%
    \put(0.09067393,0.3157843){\color[rgb]{0,0,0}\makebox(0,0)[rb]{\smash{$0$}}}%
    \put(0.09067393,0.37178378){\color[rgb]{0,0,0}\makebox(0,0)[rb]{\smash{$20$}}}%
    \put(0.09067393,0.42778114){\color[rgb]{0,0,0}\makebox(0,0)[rb]{\smash{$40$}}}%
    \put(0.09067393,0.11352529){\color[rgb]{0,0,0}\makebox(0,0)[rb]{\smash{$0$}}}%
    \put(0.09067393,0.17803112){\color[rgb]{0,0,0}\makebox(0,0)[rb]{\smash{$10$}}}%
    \put(0.09067393,0.24253906){\color[rgb]{0,0,0}\makebox(0,0)[rb]{\smash{$20$}}}%
    \put(0.01688055,0.18752547){\color[rgb]{0,0,0}\rotatebox{90}{\makebox(0,0)[b]{\smash{$p_{3}$}}}}%
    \put(0.01833333,0.40215061){\color[rgb]{0,0,0}\rotatebox{90}{\makebox(0,0)[b]{\smash{$p_{2}$}}}}%
    \put(0.01833333,0.61477527){\color[rgb]{0,0,0}\rotatebox{90}{\makebox(0,0)[b]{\smash{$p_{1}$}}}}%
    \put(0.10174798,0.0678249){\color[rgb]{0,0,0}\makebox(0,0)[b]{\smash{$0$}}}%
    \put(0.24936702,0.0678249){\color[rgb]{0,0,0}\makebox(0,0)[b]{\smash{$10$}}}%
    \put(0.39698606,0.0678249){\color[rgb]{0,0,0}\makebox(0,0)[b]{\smash{$20$}}}%
    \put(0.54460511,0.0678249){\color[rgb]{0,0,0}\makebox(0,0)[b]{\smash{$30$}}}%
    \put(0.69222414,0.0678249){\color[rgb]{0,0,0}\makebox(0,0)[b]{\smash{$40$}}}%
    \put(0.83984319,0.0678249){\color[rgb]{0,0,0}\makebox(0,0)[b]{\smash{$50$}}}%
    \put(0.98746224,0.0678249){\color[rgb]{0,0,0}\makebox(0,0)[b]{\smash{$60$}}}%
    \put(0.54860111,0.00572086){\color[rgb]{0,0,0}\makebox(0,0)[b]{\smash{time ($s$)}}}%
    \put(0.18904892,0.70094132){\color[rgb]{0,0,0}\makebox(0,0)[rb]{\smash{\tiny $\times 10^2$}}}%
  \end{picture}%
\endgroup%
}%
\begin{figure}
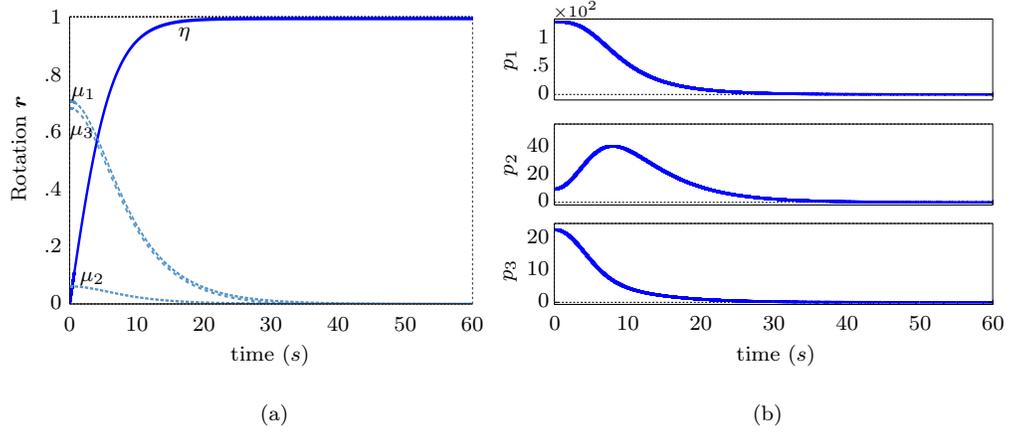

\begin{centering}
\begin{tabular}{c||cc}
\multicolumn{2}{c}{%
\scriptsize%
\def\svgwidth{0.445\columnwidth}%
\import{Figures/Hybrid_controller/}{figEx1_DiscVSHybrid_orientation___hybridOnly.pdf_tex}%
} & 
\scriptsize\def\svgwidth{0.48\columnwidth}%
\import{Figures/Hybrid_controller/}{figEx1_DiscVSHybrid_translation_curva3x___hybridonly.pdf_tex}%
\tabularnewline
\multicolumn{2}{c}{\vspace{-1pt}$\ $$\ $$\ $$\ $$\ $$\ ${\scriptsize(a)}} & \vspace{-1pt}$\ \ \ ${\scriptsize(b)}\tabularnewline
\end{tabular}
\par\end{centering}
\caption{Numerical example for the hybrid controller: (\emph{a}) Trajectory
of the rotation unit quaternion $\protect\quat r$ in terms of $\eta$
and $\protect\quat{\mu}$ (dashed line). (\emph{b}) Trajectory of
the three-dimensional translation elements $\protect\myvec p=p_{1}\protect\imi+p_{2}\protect\imj+p_{3}\protect\imk$.
\label{fig:Convergence }}
\end{figure}
\begin{figure}
\begin{centering}
\footnotesize%
\def\svgwidth{0.75\columnwidth}%
\import{Figures/Han_vs_Hybrid/}{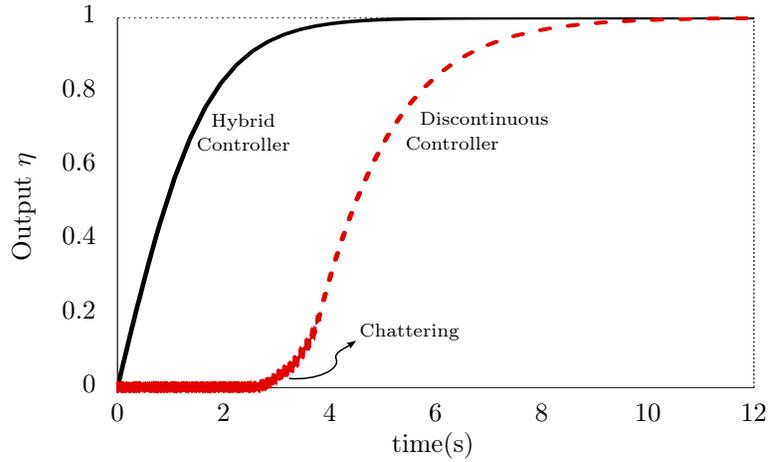}%
\par\end{centering}
\caption{Trajectory of $\eta$ with hybrid feedback controller (\ref{eq:hybrid control law - twist})
and discontinuous controller (\ref{eq:Han_controller}) over time.\label{fig:Hybrid vs Discontinuous }}
\end{figure}

To illustrate the influence of the design parameter $\delta$ over
the switches along time of the closed-loop system (\ref{eq:DQ_kinematics}),
a set of simulations is performed using the hybrid controller (\ref{eq:hybrid control law - twist})
with different values for $\delta$. For these simulations, we assume
the same initial condition, control gain, and measurement noise as
defined in the former scenario. As shown in Fig.$\,$\ref{fig:Jumps_by_histeresys}(a),
larger hysteresis parameters yield a smaller number of switches, as
one would expect. As shown in Fig.$\,$\ref{fig:Jumps_by_histeresys}(b),
it is also interesting to highlight that the number of switches tends
to decrease along time as $\eta$ converges to the equilibrium.

\newcommand{\scriptsize%
\def\svgwidth{0.46\columnwidth}%
\begingroup%
  \makeatletter%
  \providecommand\color[2][]{%
    \errmessage{(Inkscape) Color is used for the text in Inkscape, but the package 'color.sty' is not loaded}%
    \renewcommand\color[2][]{}%
  }%
  \providecommand\transparent[1]{%
    \errmessage{(Inkscape) Transparency is used (non-zero) for the text in Inkscape, but the package 'transparent.sty' is not loaded}%
    \renewcommand\transparent[1]{}%
  }%
  \providecommand\rotatebox[2]{#2}%
  \ifx\svgwidth\undefined%
    \setlength{\unitlength}{394.803112bp}%
    \ifx\svgscale\undefined%
      \relax%
    \else%
      \setlength{\unitlength}{\unitlength * \real{\svgscale}}%
    \fi%
  \else%
    \setlength{\unitlength}{\svgwidth}%
  \fi%
  \global\let\svgwidth\undefined%
  \global\let\svgscale\undefined%
  \makeatother%
  \begin{picture}(1,1.05470293)%
    \put(0,0){\includegraphics[width=\unitlength]{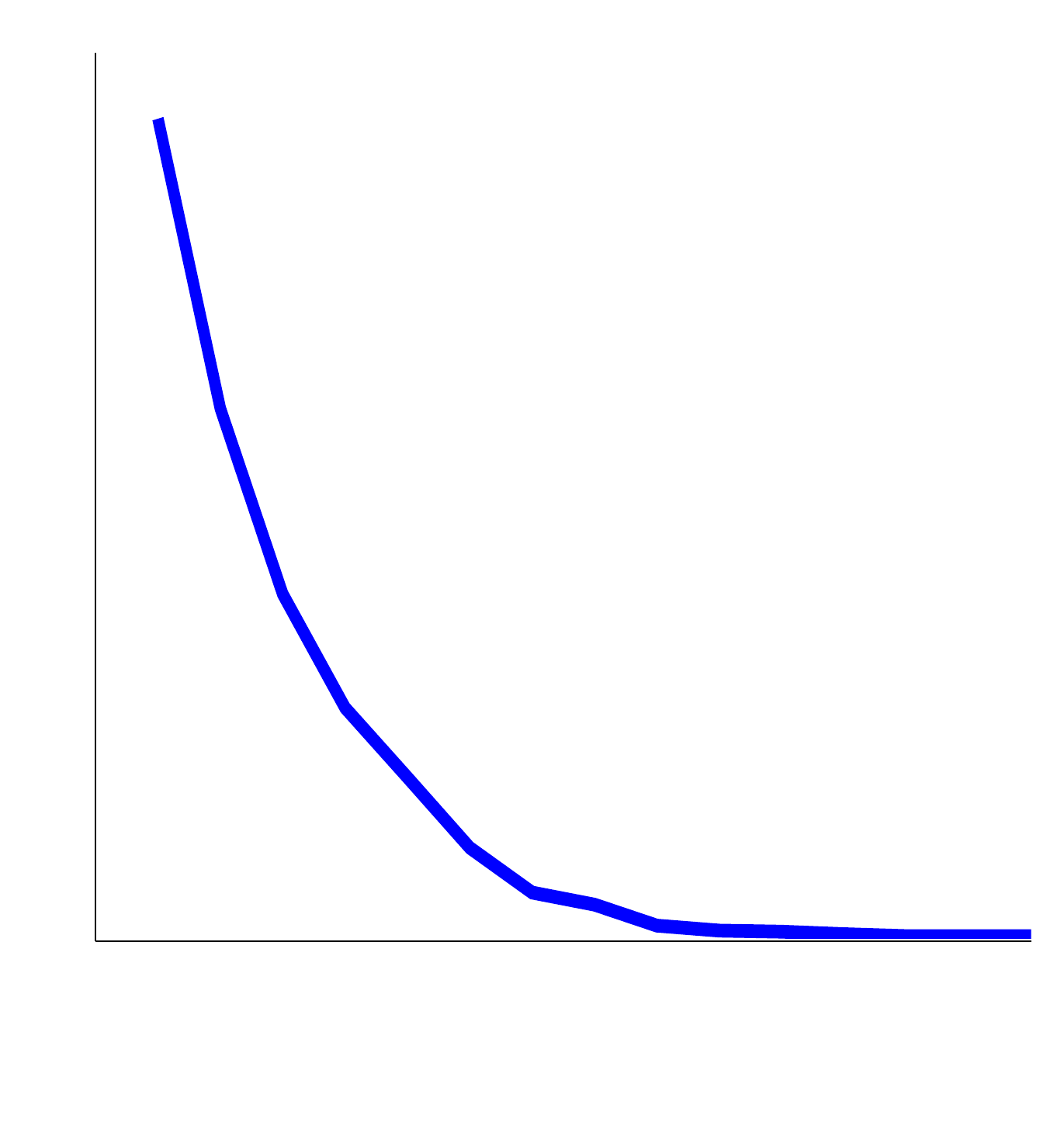}}%
    \put(0.01081552,0.58389326){\color[rgb]{0,0,0}\rotatebox{90}{\makebox(0,0)[b]{\smash{Number of Switches}}}}%
    \put(0.52854192,0.00384411){\color[rgb]{0,0,0}\makebox(0,0)[b]{\smash{Hysteresis $\delta$}}}%
    \put(0.08984225,0.09225593){\color[rgb]{0,0,0}\makebox(0,0)[b]{\smash{$0$}}}%
    \put(0.38298481,0.09225593){\color[rgb]{0,0,0}\makebox(0,0)[b]{\smash{$0.1$}}}%
    \put(0.67612535,0.09225593){\color[rgb]{0,0,0}\makebox(0,0)[b]{\smash{$0.2$}}}%
    \put(0.96926791,0.09225593){\color[rgb]{0,0,0}\makebox(0,0)[b]{\smash{$0.3$}}}%
    \put(0.07225373,0.15883876){\color[rgb]{0,0,0}\makebox(0,0)[rb]{\smash{$0$}}}%
    \put(0.07225373,0.32593264){\color[rgb]{0,0,0}\makebox(0,0)[rb]{\smash{$1$}}}%
    \put(0.07225373,0.49302651){\color[rgb]{0,0,0}\makebox(0,0)[rb]{\smash{$2$}}}%
    \put(0.07225373,0.66012039){\color[rgb]{0,0,0}\makebox(0,0)[rb]{\smash{$3$}}}%
    \put(0.07225373,0.82721424){\color[rgb]{0,0,0}\makebox(0,0)[rb]{\smash{$4$}}}%
    \put(0.07225373,0.99430813){\color[rgb]{0,0,0}\makebox(0,0)[rb]{\smash{$5$}}}%
    \put(0.23385626,1.01091344){\color[rgb]{0,0,0}\makebox(0,0)[rb]{\smash{$\times 10^3$}}}%
  \end{picture}%
\endgroup%
}{\scriptsize%
\def\svgwidth{0.46\columnwidth}%
\begingroup%
  \makeatletter%
  \providecommand\color[2][]{%
    \errmessage{(Inkscape) Color is used for the text in Inkscape, but the package 'color.sty' is not loaded}%
    \renewcommand\color[2][]{}%
  }%
  \providecommand\transparent[1]{%
    \errmessage{(Inkscape) Transparency is used (non-zero) for the text in Inkscape, but the package 'transparent.sty' is not loaded}%
    \renewcommand\transparent[1]{}%
  }%
  \providecommand\rotatebox[2]{#2}%
  \ifx\svgwidth\undefined%
    \setlength{\unitlength}{394.803112bp}%
    \ifx\svgscale\undefined%
      \relax%
    \else%
      \setlength{\unitlength}{\unitlength * \real{\svgscale}}%
    \fi%
  \else%
    \setlength{\unitlength}{\svgwidth}%
  \fi%
  \global\let\svgwidth\undefined%
  \global\let\svgscale\undefined%
  \makeatother%
  \begin{picture}(1,1.05470293)%
    \put(0,0){\includegraphics[width=\unitlength]{fig_ex_switches_vs_hysteresis__halfsize.pdf}}%
    \put(0.01081552,0.58389326){\color[rgb]{0,0,0}\rotatebox{90}{\makebox(0,0)[b]{\smash{Number of Switches}}}}%
    \put(0.52854192,0.00384411){\color[rgb]{0,0,0}\makebox(0,0)[b]{\smash{Hysteresis $\delta$}}}%
    \put(0.08984225,0.09225593){\color[rgb]{0,0,0}\makebox(0,0)[b]{\smash{$0$}}}%
    \put(0.38298481,0.09225593){\color[rgb]{0,0,0}\makebox(0,0)[b]{\smash{$0.1$}}}%
    \put(0.67612535,0.09225593){\color[rgb]{0,0,0}\makebox(0,0)[b]{\smash{$0.2$}}}%
    \put(0.96926791,0.09225593){\color[rgb]{0,0,0}\makebox(0,0)[b]{\smash{$0.3$}}}%
    \put(0.07225373,0.15883876){\color[rgb]{0,0,0}\makebox(0,0)[rb]{\smash{$0$}}}%
    \put(0.07225373,0.32593264){\color[rgb]{0,0,0}\makebox(0,0)[rb]{\smash{$1$}}}%
    \put(0.07225373,0.49302651){\color[rgb]{0,0,0}\makebox(0,0)[rb]{\smash{$2$}}}%
    \put(0.07225373,0.66012039){\color[rgb]{0,0,0}\makebox(0,0)[rb]{\smash{$3$}}}%
    \put(0.07225373,0.82721424){\color[rgb]{0,0,0}\makebox(0,0)[rb]{\smash{$4$}}}%
    \put(0.07225373,0.99430813){\color[rgb]{0,0,0}\makebox(0,0)[rb]{\smash{$5$}}}%
    \put(0.23385626,1.01091344){\color[rgb]{0,0,0}\makebox(0,0)[rb]{\smash{$\times 10^3$}}}%
  \end{picture}%
\endgroup%
}%
\newcommand{\scriptsize%
\def\svgwidth{0.46\columnwidth}%
\begingroup%
  \makeatletter%
  \providecommand\color[2][]{%
    \errmessage{(Inkscape) Color is used for the text in Inkscape, but the package 'color.sty' is not loaded}%
    \renewcommand\color[2][]{}%
  }%
  \providecommand\transparent[1]{%
    \errmessage{(Inkscape) Transparency is used (non-zero) for the text in Inkscape, but the package 'transparent.sty' is not loaded}%
    \renewcommand\transparent[1]{}%
  }%
  \providecommand\rotatebox[2]{#2}%
  \ifx\svgwidth\undefined%
    \setlength{\unitlength}{380bp}%
    \ifx\svgscale\undefined%
      \relax%
    \else%
      \setlength{\unitlength}{\unitlength * \real{\svgscale}}%
    \fi%
  \else%
    \setlength{\unitlength}{\svgwidth}%
  \fi%
  \global\let\svgwidth\undefined%
  \global\let\svgscale\undefined%
  \makeatother%
  \begin{picture}(1,1.04210526)%
    \put(0,0){\includegraphics[width=\unitlength]{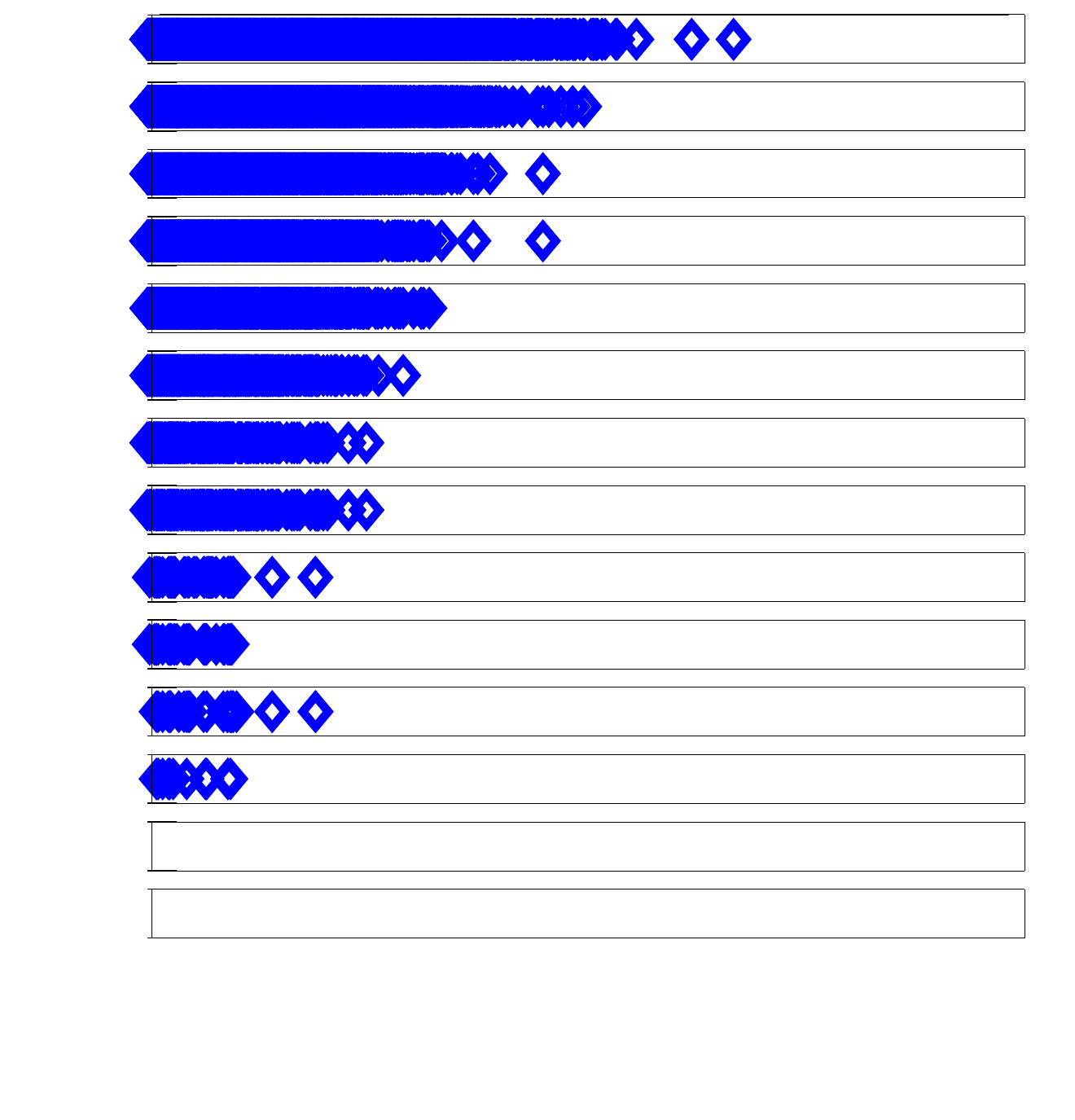}}%
    \put(0.10026311,0.96906118){\rotatebox{90}{\makebox(0,0)[lb]{\smash{$.02$}}}}%
    \put(0.10026311,0.72564824){\rotatebox{90}{\makebox(0,0)[lb]{\smash{$.10$}}}}%
    \put(0.10026311,0.4684863){\rotatebox{90}{\makebox(0,0)[lb]{\smash{$.18$}}}}%
    \put(0.10026311,0.21829808){\rotatebox{90}{\makebox(0,0)[lb]{\smash{$.26$}}}}%
    \put(0.3925422,0.09268369){\makebox(0,0)[lb]{\smash{0.1}}}%
    \put(0.6662265,0.09268368){\makebox(0,0)[lb]{\smash{0.2}}}%
    \put(0.94013187,0.09268368){\makebox(0,0)[lb]{\smash{0.3}}}%
    \put(0.47609472,0.01138404){\makebox(0,0)[lb]{\smash{time(s)}}}%
    \put(0.0041558,0.45704047){\rotatebox{90}{\makebox(0,0)[lb]{\smash{Hysteresis $\delta$ }}}}%
    \put(0.14666577,0.09268366){\makebox(0,0)[lb]{\smash{0}}}%
  \end{picture}%
\endgroup%
}{\scriptsize%
\def\svgwidth{0.46\columnwidth}%
\begingroup%
  \makeatletter%
  \providecommand\color[2][]{%
    \errmessage{(Inkscape) Color is used for the text in Inkscape, but the package 'color.sty' is not loaded}%
    \renewcommand\color[2][]{}%
  }%
  \providecommand\transparent[1]{%
    \errmessage{(Inkscape) Transparency is used (non-zero) for the text in Inkscape, but the package 'transparent.sty' is not loaded}%
    \renewcommand\transparent[1]{}%
  }%
  \providecommand\rotatebox[2]{#2}%
  \ifx\svgwidth\undefined%
    \setlength{\unitlength}{380bp}%
    \ifx\svgscale\undefined%
      \relax%
    \else%
      \setlength{\unitlength}{\unitlength * \real{\svgscale}}%
    \fi%
  \else%
    \setlength{\unitlength}{\svgwidth}%
  \fi%
  \global\let\svgwidth\undefined%
  \global\let\svgscale\undefined%
  \makeatother%
  \begin{picture}(1,1.04210526)%
    \put(0,0){\includegraphics[width=\unitlength]{fig_ex1_switches_vs_time__halfsize.pdf}}%
    \put(0.10026311,0.96906118){\rotatebox{90}{\makebox(0,0)[lb]{\smash{$.02$}}}}%
    \put(0.10026311,0.72564824){\rotatebox{90}{\makebox(0,0)[lb]{\smash{$.10$}}}}%
    \put(0.10026311,0.4684863){\rotatebox{90}{\makebox(0,0)[lb]{\smash{$.18$}}}}%
    \put(0.10026311,0.21829808){\rotatebox{90}{\makebox(0,0)[lb]{\smash{$.26$}}}}%
    \put(0.3925422,0.09268369){\makebox(0,0)[lb]{\smash{0.1}}}%
    \put(0.6662265,0.09268368){\makebox(0,0)[lb]{\smash{0.2}}}%
    \put(0.94013187,0.09268368){\makebox(0,0)[lb]{\smash{0.3}}}%
    \put(0.47609472,0.01138404){\makebox(0,0)[lb]{\smash{time(s)}}}%
    \put(0.0041558,0.45704047){\rotatebox{90}{\makebox(0,0)[lb]{\smash{Hysteresis $\delta$ }}}}%
    \put(0.14666577,0.09268366){\makebox(0,0)[lb]{\smash{0}}}%
  \end{picture}%
\endgroup%
}%
\begin{figure}
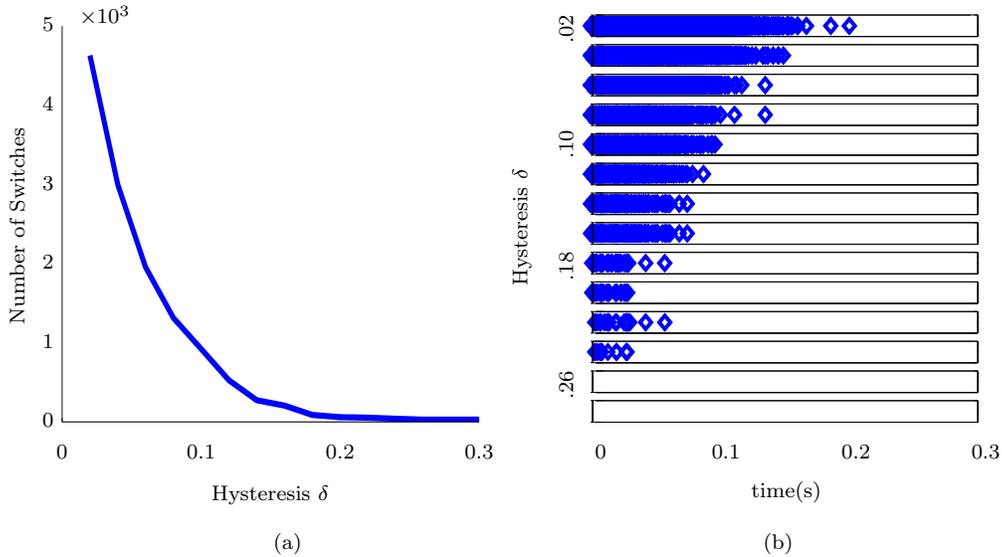

\begin{centering}
\begin{tabular}{c||cc}
\multicolumn{2}{c}{\scriptsize%
\def\svgwidth{0.46\columnwidth}%
\import{Figures/Hybrid_controller/}{fig_ex_switches_vs_hysteresis__halfsize.pdf_tex}%
\vspace{-2pt}} & \scriptsize%
\def\svgwidth{0.46\columnwidth}%
\import{Figures/Hybrid_controller/}{fig_ex1_switches_vs_time__halfsize.pdf_tex}%
\vspace{-2pt}\tabularnewline
\multicolumn{2}{c}{\vspace{-1pt}$\ $$\ $$\ $$\ $$\ $$\ ${\scriptsize(a)}} & \vspace{-1pt}$\ \ \ ${\scriptsize(b)}\tabularnewline
\end{tabular}
\par\end{centering}
\caption{The number of switches with regard to the hysteresis parameter $\delta$
is shown in (\emph{a}), while the switches along time $s(t)$ are
illustrated in (\emph{b}) for different values of $\delta$.\label{fig:Jumps_by_histeresys}}
\end{figure}

Moreover, to elucidate the influence of the hysteresis parameter $\delta$
with regard to the unwinding phenomenon, a different scenario is simulated
using (\ref{eq:hybrid control law - twist}) with $\delta=0.15$ and
$\delta=0.95$ and with a proportional gain $k=5$. We assume an initial
state with $\eta$ close to $-1$ and $h=1$. As shown in Fig.$\,$\ref{fig:Unwinding},
very large values of $\delta$ may induce the stabilization to $\eta=1,$
which leads to needless motions and control efforts compared to the
case of $\delta=0.15$. 
\begin{figure}
\begin{centering}
\footnotesize%
\vspace{-20pt}
\def\svgwidth{0.55\columnwidth}%
\import{Figures/}{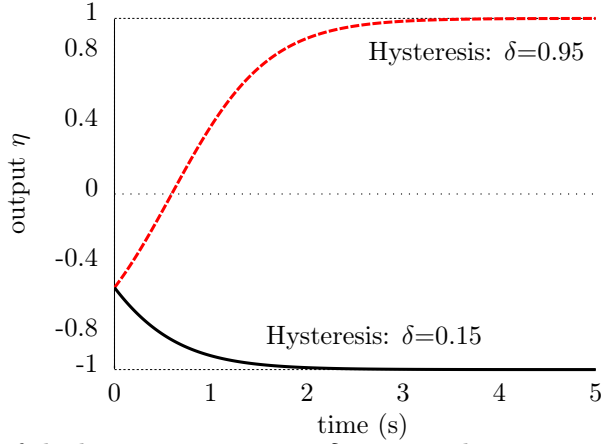}%
\par\end{centering}
\caption{Influence of the hysteresis parameter $\delta$ on unwinding\textemdash $\eta$
converges to the farther stable point when the value of $\delta$
increases.\label{fig:Unwinding}}
\end{figure}

Lastly, as a concluding example, and to assess the effectiveness of
the proposed solution in a more practical context, we designed a simple
robot manipulator kinematic control task. To this aim, we considered
a 6-DOF manipulator, the Comau SMART SiX robot, and two simple control
tasks whereby the end-effector of the robot manipulator is regarded
as a rigid body and described within the unit dual quaternion framework.\footnote{Further information on how to describe and map the end-effector's
rigid motion using unit dual quaternions can be found in \cite{Ador:11}.}

\begin{figure}
\begin{centering}
\scriptsize%
\def\svgwidth{0.95\columnwidth}%
\import{Figures/exManip/}{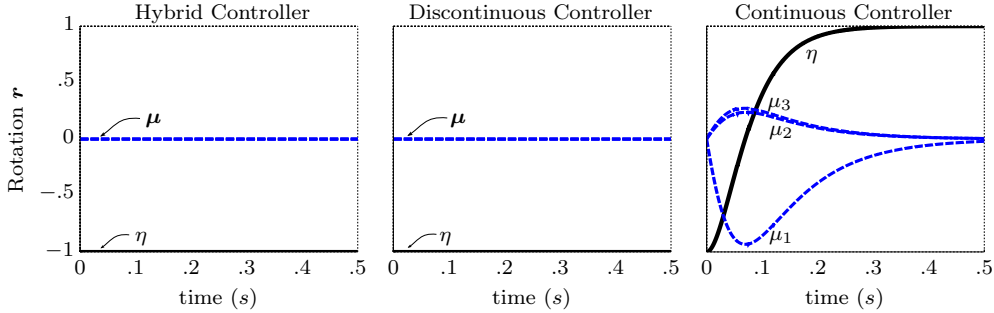}%
\par\end{centering}
\caption{Trajectory of the rotation unit quaternion $\protect\quat r$ in terms
of $\eta$ and $\protect\quat{\mu}$ using the proposed hybrid controller
(\emph{left}), the discontinuous controller (\emph{center}), and the
continuous feedback controller (\emph{right}). The unwinding phenomenon
arises only on the continuous feedback controller.\label{fig:manipResult:Unwinding:Rotation}}
\end{figure}

In the first control setting, the end-effector of the manipulator
$\dq q_{m}$, described within unit dual quaternions framework, is
expected to hold the same current configuration\textemdash hence,
the desired pose $\dq q_{d}=\dq q_{m}$\textemdash in the presence
of different sensor readings. In this case, it is rather ordinary
to have readings in the antipodal configuration of the current pose,
that is, $-\dq q_{m}$. To illustrate the behavior of different controllers\textemdash with
gain equally set to $k=5$\textemdash within this particular case,
that is, $\dq q_{d}=-\dq q_{m}$, we set the manipulator to a random
configuration and sought to stabilize the system using a continuous
feedback controller, a discontinuous controller, and the proposed
hybrid controller (with $\delta=0.1$). The simulated result can be
observed in Figs.$\,$\ref{fig:manipResult:Unwinding:Rotation} and
\ref{fig:manipResult:Unwinding:Position}, which illustrate the rigid
motion of the manipulator's end-effector. Clearly, the continuous
feedback controller failed to maintain the same end-effector configuration,
exhibiting the unwinding phenomenon which yields needless motions\textemdash as
observed in Fig.$\,$\ref{fig:manipFigure:Unwinding}.\footnote{Since the discontinuous and hybrid feedback controllers successfully
hold the same end-effector pose, the corresponding trajectories of
the robot were not shown in this figure because they are constant.
A video comparing the trajectories generated by the three different
controllers can be seen in the supplementary material.} Such phenomena could be avoided by simply enforcing a discontinuous
controller or by using the proposed hysteresis-based hybrid control
strategy.

\newcommand{\clab}{orange!90!black!75}
\newcommand{\claf}{blue!50!black!100}

\begin{figure}
\begin{centering}
\scriptsize%
\def\svgwidth{0.50\columnwidth}%
\import{Figures/exManip/}{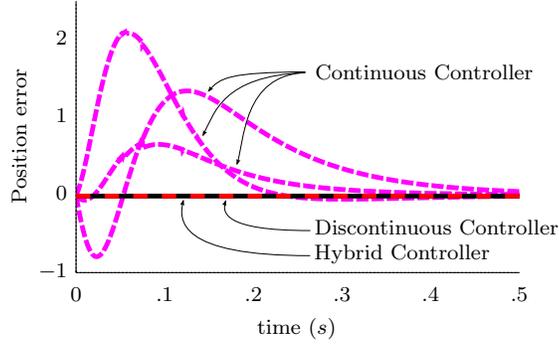}%
\par\end{centering}
\caption{Trajectory of the three-dimensional translation error using the hybrid
(\emph{solid line}), the discontinuous (\emph{red dashed line}) and
the continuous controller (\emph{magenta dashed line}).\label{fig:manipResult:Unwinding:Position}}
\end{figure}

\begin{figure}
\centering \scriptsize%
\def\svgwidth{0.93\columnwidth}%
\import{Figures/exManip/snapshot/}{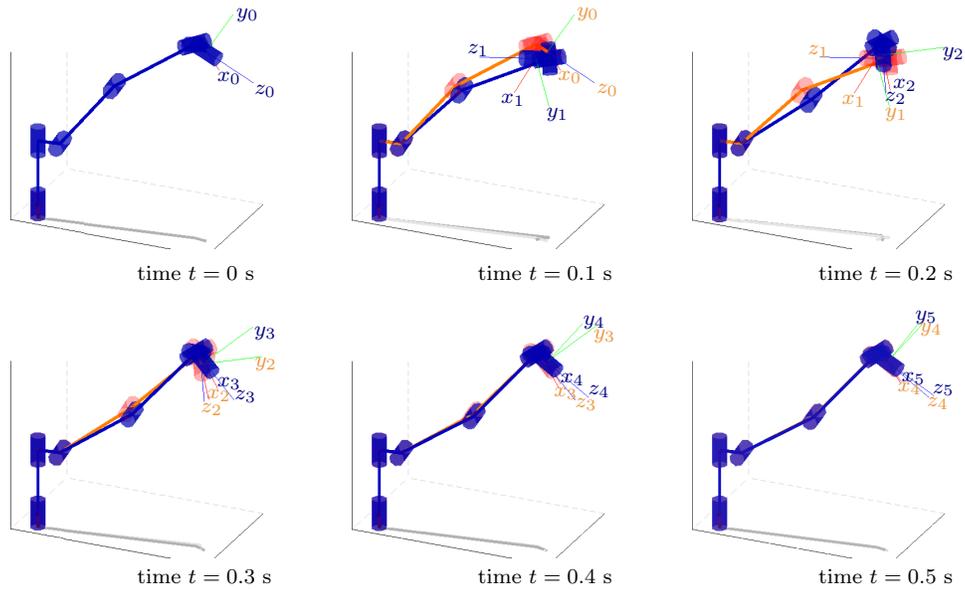} %

\caption{Simulation snapshots of the continuous controller, when the desired
robot pose, represented by $\protect\dq q$, is changed to the same
pose, but now represented by $-\protect\dq q$. The unwinding phenomenon
can be observed in contrast to maintaining the desired pose. In all
snapshots, the light red robot represents the initial robot configuration.\protect\footnotemark\label{fig:manipFigure:Unwinding}}
\end{figure}

Nonetheless, as observed in Fig.$\,$\ref{fig:Hybrid vs Discontinuous },
the discontinuous sign-based approach is particularly sensitive to
measurement noises. Hence, the second control task was devised to
illustrate the behavior of the robot manipulator in the presence of
measurement noises. In this scenario, both controllers were supposed
to take the end-effector pose from an initial pose, represented by
$\dq q_{0}{=}-0.31-\imi0.67+\imj0.67-\imk0.05+\dual\left({-}0.06{-}\imi0.31{-}\imj0.31{+}\imk0.40\right)$
and corresponding to a rotation angle of $(\pi+0.63)$ rad around
the axis $(-\sqrt{2}/2,-\sqrt{2}/2,0)$ followed by a translation
of $(-0.39,-0.29,-1.09)$, to a desired pose, represented by $\dq q_{d}{=}\imi0.707+\imj0.707+\dual\left(0.28{-}\imi0.38{+}\imj0.38{+}\imk0.28\right)$
and corresponding to a rotation angle of $\pi$ rad around the axis
$(\sqrt{2}/2,\sqrt{2}/2,0)$ followed by a translation of $(-0.79,0.00,-1.07)$.
The error between these poses are represented by the dual quaternion
$\dq q_{e}=\dq q_{m}^{*}\dq q_{d}$, where $\dq q_{m}$ is the measured
dual quaternion. In addition, the measurement noise over $\eta$ was
set to $\mathcal{N}(0,0.09)$ and the control gain for both controllers
were set to $k=0.020$\textemdash the hysteresis parameter was set
to $\delta=0.1$. Fig.$\,$\ref{fig:manipResult:compare_Hyb_Discont}
illustrate the rigid motion of the manipulator's end-effector and
the behavior of both controllers. It is easy to see that the problematic
noise influence is restricted to the discontinuous controller\textemdash resulting
in undesired chattering and delaying the closed-loop convergence.
As expected, the proposed hybrid solution ensures robust performance,
that is, a trajectory without chattering. \footnotetext{A video showing the motion of the robot is included in the supplementary material.} 

\begin{figure}
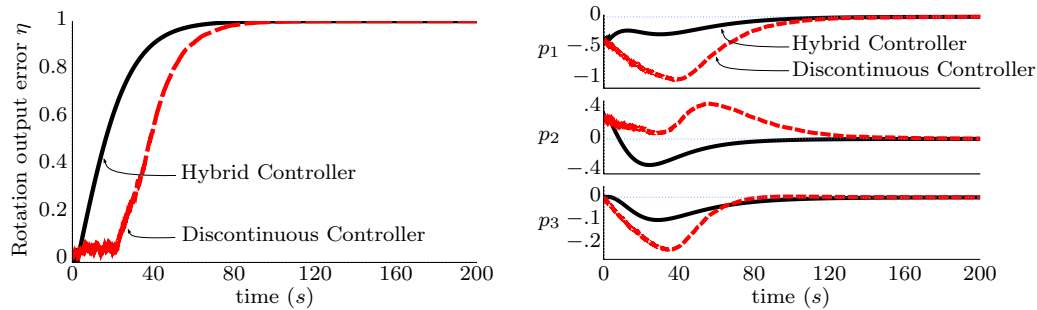

\begin{centering}
\begin{tabular}{cc}
\scriptsize%
\def\svgwidth{0.46\columnwidth}%
\import{Figures/exManip/}{figHybExManip_RotationError_Both_Eta.pdf_tex} & \scriptsize%
\def\svgwidth{0.46\columnwidth}%
\import{Figures/exManip/}{figHybExManip_TranslValues3x_both.pdf_tex}\tabularnewline
\end{tabular}
\par\end{centering}
\caption{The left figure shows the trajectory of the rotation error in terms
of the scalar part $\eta$ using the hybrid (\emph{solid line}) and
the discontinuous controller (\emph{red dashed line}). The right figure
shows the trajectory of the three-dimensional translation elements
$\protect\myvec p=p_{1}\protect\imi+p_{2}\protect\imj+p_{3}\protect\imk$
with reference given in dotted blue line.\label{fig:manipResult:compare_Hyb_Discont}}
\end{figure}

\section{Conclusion}

\label{sec:Conclusion}

In this paper, a kinematic controller for the rigid body stabilization
problem was presented. To prove the stability of this controller,
a Lyapunov function that exploits the structure of the group of unit
dual quaternions was proposed. Moreover, this controller was simulated
and compared to another kinematic controller based on dual quaternions
that has recently been presented in the literature. Simulation results
show that the proposed controller is robust against measurement noises
and, different from discontinuous-based feedback controllers, it also
avoids the problem of sensitivity of the global stabilization property
to chattering. The proposed solution was also simulated in a simple
robot manipulator kinematic control task to assess the controller
in a more practical context.

In other scenarios it is possible that the input to the system is
done by torques and forces instead of the generalized velocity. Further
work will aim to incorporate the inertial parameters in the controller
design.

\section*{Acknowledgments}

This work is partially supported by the Coordination for the Improvement
of Higher Education (CAPES), by the Brazilian National Council for
Technological and Scientific Development (CNPq grant numbers are 456826/2013-0
and 312627/2013-0), and by Fundação de Amparo à Pesquisa do Estado
de Minas Gerais (FAPEMIG grant number is APQ-00967-14). We are also
grateful to Paulo Percio Mota Magro for the discussions relevant to
this work.

\section*{Appendix}
\begin{thm}
\label{thm:BB:00} \cite{BB:00} Let $\mathcal{M}$ be a manifold
of dimension $m$ and consider a continuous vector field $f$ on $\mathcal{M}$.
Suppose $\pi:\mathcal{M}\rightarrow\mathcal{L}$ is a vector bundle
on $\mathcal{L}$, where $\mathcal{L}$ is a compact, $r$-dimensional
manifold with $r\le m$. Then there exists no equilibrium of $f$
that is globally asymptotically stable.
\end{thm}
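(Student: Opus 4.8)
The plan is to argue by contradiction, combining the topological triviality of the basin of attraction of an asymptotically stable equilibrium with the homotopy type that the vector-bundle structure imposes on $\mathcal{M}$. Suppose $f$ had a globally asymptotically stable equilibrium $p\in\mathcal{M}$. Global attractivity makes the region of attraction of $p$ equal to all of $\mathcal{M}$, so the whole argument reduces to pinning down the topology of that basin. The first ingredient I would invoke is the classical structural fact---established by Wilson for smooth vector fields, and in the form actually needed here (continuous $f$) the one used in \cite{BB:00}---that the region of attraction of an asymptotically stable equilibrium of a continuous vector field is a \emph{contractible} open set. Thus $\mathcal{M}$ would be contractible.

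Next I would bring in the bundle projection $\pi:\mathcal{M}\rightarrow\mathcal{L}$. For any vector bundle, fiberwise scaling $(v,t)\mapsto(1-t)v$ is a (strong) deformation retraction of the total space onto its zero section, and the zero section is homeomorphic to the base; applied to $\pi$ this yields a homotopy equivalence $\mathcal{M}\simeq\mathcal{L}$. Combined with the previous step, $\mathcal{L}$ would be contractible. To finish I would note that a compact boundaryless manifold of dimension $r\ge1$ is never contractible: it carries a nonzero mod-$2$ fundamental class, so $H_{r}(\mathcal{L};\mathbb{Z}/2)\cong\mathbb{Z}/2\ne0$, contradicting the vanishing of all reduced homology of a contractible space. (The degenerate case $r=0$, where $\mathcal{L}$ is a finite point set, is not of interest and is tacitly excluded; in the application of this theorem $\mathcal{L}=\mathcal{S}^{3}$ with $r=3$.) This contradiction proves the claim.

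The step I expect to be the main obstacle is the very first one. When $f$ is merely continuous, solutions of $\dot{x}=f(x)$ need not be unique, so the ``region of attraction'' and the homotopy contracting $\mathcal{M}$ onto $p$ cannot simply be read off from a well-defined flow. The cleanest way around this is to pass through a smooth converse Lyapunov function for the asymptotically stable equilibrium and use the nested structure of its sublevel sets---controlled by Wilson-type arguments---to manufacture the required contraction; alternatively one simply cites the corresponding lemma in \cite{BB:00}. Everything downstream---the deformation retraction onto the zero section and the mod-$2$ homology computation---is then routine algebraic topology.
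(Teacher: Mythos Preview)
The paper does not supply its own proof of this theorem: it is merely quoted in the Appendix, with attribution to \cite{BB:00}, for the reader's convenience, and is used as a black box in the proof of Theorem~\ref{thm:global_asymptotically_stable_DQ}. Your sketch is a faithful reconstruction of the Bhat--Bernstein argument---contractibility of the basin of attraction, homotopy equivalence of the total space with the base via retraction onto the zero section, and nontriviality of the mod-$2$ top homology of a closed manifold---and your identification of the delicate step (contractibility of the domain of attraction when $f$ is only continuous and solutions may fail to be unique) is precisely the issue handled in \cite{BB:00}. There is nothing in this paper to compare against; your proposal is correct and matches the cited source.
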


\bibliographystyle{elsarticle-num-names}
\bibliography{abreviada,referencias,extras}

\end{document}